\newtheorem{theorem}{Theorem}[section]
\newtheorem{corollary}{Corollary}[theorem]
\newtheorem{prop}[theorem]{Proposition}
\newtheorem{lemma}[theorem]{Lemma}
\theoremstyle{definition}
\theoremstyle{remark}
\newtheorem*{remark}{Remark}
\newcommand{\R}{{\mathbb R}}
\newcommand{\g}{{\gamma}}
\newcommand{\E}{{\mathcal{E}}}
\newcommand{\F}{{\mathcal{F}}}
\newcommand{\G}{\Gamma}
\subjclass[2020]{42B10}
\title[Uniform Restriction for Simple Curves of Bounded Frequency]{Uniform Fourier Restriction Estimate for Simple Curves of Bounded Frequency}
\author{J. de Dios Pont}
\address{Department of Mathematics, University of California, Los Angeles, CA}
\email{jdedios@math.ucla.edu}
\author{H. J. Samuelsen}
\address{Department of Mathematical Sciences,
         Norwegian University of Science and Technology,
         Trondheim, Norway}
\email{helge.j.samuelsen@ntnu.no}
\date{\today}
\begin{document}

\begin{abstract}
    In this paper we prove a uniform Fourier restriction estimate over the class of simple curves where the last coordinate function can be extended to a holomorphic function of bounded frequency in a sufficiently large disc. The proof is based on a decomposition scheme for this class of functions.
\end{abstract}
\maketitle

\section{Introduction and main result}

The Fourier restriction operator $\mathcal{R}_\g$ associated to a curve $\g:I\to\R^d$ is defined as
\[
\mathcal{R}_\g(f)(t):=\widehat{f}(\g(t)),
\]
where $f\in\mathscr{S}(\R^d)$ and $t\in I$. In this paper we investigate uniform boundedness of the restriction operator $\mathcal{R}_\g:L^p(\mathbb{R}^d)\to L^q(I;\lambda_\g dt)$, where the latter is equipped with the affine arc length measure $\lambda_\g dt=|L_\g|^{\frac{2}{d^2+d}}dt$ and $L_\g=\det{(\g',\ldots,\g^{(d)})}$ denotes the torsion of the curve. The curves we consider are called \textit{simple} curves, and are of the form
\begin{equation}\label{defSimpCurve}
\g(t)=\left(t,t^2,\ldots, t^{d-1},\varphi(t)\right),
\end{equation}
where $\varphi$ belongs to certain class of functions, the so-called functions of bounded frequency.

The study of Fourier restriction of curves in dimension two can be traced back to Fefferman and Stein \cite{Fefferman}, Zygmund \cite{Zygmund}, and H\"{o}rmander \cite{Hormander}. The first uniform restriction estimate for curves is due to Sj\"{o}lin \cite{Sjolin}. He proved the existence of a restriction constant uniform over all $C^2$ convex plane curves for the range $1\leq p< 4/3$ and $3q\leq p'$, and showed that this range is optimal by considering the curve $\g(t)=(t,exp(-t^{-1})\sin(t^{-k}))$ for $k>q$. This curve acts as a counter-example to the boundedness of $\mathcal{R}_\g$ due to the rapid oscillation of the curve near $t=0$. The result of Sj\"{o}lin has been extended by Fraccaroli, who proved uniformity over all convex plane curves \cite{Fraccaroli}.

The first result in dimension three belongs to Prestini, who proved restriction in a reduced range \cite{Prestini}. The work of Prestini was extended to higher dimensions by Christ in \cite{Christ} for the range
\begin{equation}\label{ChristRange}
q\leq \frac{2}{d^2+d}p',\quad 1\leq p<\frac{d^2+2d}{d^2+2d-2},
\end{equation}
now known as the Christ range. The maximal conjectured range of $p$ and $q$ is
\begin{equation}\label{FullRange}
  1\leq p<\frac{d^2+d+2}{d^2+d},\quad 1\leq q\leq \frac{2}{d^2+d}p^{\prime}.
\end{equation}
where $p^{\prime}=p/(p-1)$ denotes the H\"{o}lder conjugate of $p$. The optimality of the range of $p$ follows from an argument by Arkhipov, Chubarikov and Karatsuba \cite{Arkhipov}, while a scaling argument can be used to derive the endpoint case for the range of $q$. 

The first result beyond the Christ range is due to Drury, who proved the restriction estimate in the full range \eqref{FullRange} for the moment curve in dimension three \cite{Drury}. 
He also pointed out the benefits of using the affine arc length measure rather than the Euclidean in the case of degenerate curves \cite{Drury-Affine}.

A well studied class of curves in uniform restriction is that of polynomial curves, meaning curves of the form $\g(t)=(p_1(t),\ldots,p_d(t))$ where $p_i$ are polynomials. 
A significant result is due to Dendrinos and Wright \cite{DW}. They proved restriction for the Christ range \eqref{ChristRange},
where the restriction constant only depends on $p,q,d$ and the maximal degree $N$. Stovall later extended the result to the full conjectured range \eqref{FullRange}. We emphasise that her result holds for any polynomial curve defined on $I=\mathbb{R}$ \cite{Stovall}.

In \cite{Chen}, Chen, Fan and Wang investigated simple curves of the form \eqref{defSimpCurve} for smooth functions $\varphi\in C^\infty(I)$ on a compact interval $I$. They proved a restriction estimate in the full range \eqref{FullRange}, under the additional condition that the torsion is bounded between two constants. The result is uniform in the sense that the constant only depends on $p$ and $d$. By considering a dyadic decomposition of the torsion, they extended the restriction estimate for the range
\[
1\leq p<\frac{d^2+d+2}{d^2+d},\quad 1\leq q< \frac{2}{d^2+d}p^{\prime}.
\]
However, the constant in the restriction now depends on a certain H\"{older} norm of the curve, and therefore is no longer uniform. Moreover, by building on the counter-example of Sj\"{o}lin, they were able to show that the estimate fails for $(d^2+d)q=2p'$ by considering the curve
\[
\g(t)=(t,\ldots,t^{d-1},e^{-t^{-\alpha}}\sin{t^{-\beta}}),
\]
for $\alpha>0$ and $2\beta>(d+1)\alpha$. This implies that a restriction estimate cannot hold for smooth simple curves at the critical line
$(d^2+d)q=2p'$. The result of Chen, Fan and Wang was extended to general smooth curves by Jesurum \cite{Jesurum}.

A class of curves where a restriction estimate could hold at the critical line ($(d^2+d)q=2p'$) is that of real analytic curves. By taking successive polynomial approximations of Sj\"{o}lin's example it is clear that even global analyticity does not suffice to obtain uniform bounds. Bounding some form of global oscillation of a family of curves becomes necessary.
In this article we consider a special class of real analytic simple curves, and prove a uniform restriction estimate for the full range \eqref{FullRange}.
Our class consists of real analytic curves of the form \eqref{defSimpCurve} where $\varphi$ has so-called bounded frequency. The frequency function goes back to the work of Agmon \cite{Agmon} and Almgren \cite{Almgren}, and has since become an important tool in the theory of elliptic PDEs. For a non-vanishing holomorphic function $u=\sum_{n=0}^\infty c_nz^n$ on $\Omega\subseteq \mathbb{C}$, we define the frequency function on $D(x,r)\subset \Omega$ as
\[
N_u(z_0,R)=2\frac{\sum_{n=1}^\infty n|c_n|^2R^{2n}}{\sum_{k=0}^\infty |c_k|^2R^{2k}}.
\]
If a function $u$ has frequency bounded by $N$ in a disc $D(x,r)$, then it behaves similar to a polynomial of degree $CN$ in every smaller disc centered at $x$.

Given a compact interval $I=[a-r,a+r]$, an integer $N\in\mathbb{N}$, and a positive constant $R>r$, we consider the classes of functions
\begin{align*}
    &\mathcal{A}(I,N,R):=\{\varphi: I\to \mathbb{R}: \exists \Phi\in\mathrm{Hol}(D(a,R))\text{ such that }\Phi|_I=\varphi,\text{ and } N_\Phi(a,R)\leq N\},\\
    &\mathcal{A}^{d-1}_0(I,N,R):=\{\varphi\in \mathcal{A}(I,N,R): \varphi(a)=\varphi'(a)=\ldots=\varphi^{(d-1)}(a)=0\}.
\end{align*}
Any simple curve \eqref{defSimpCurve} with $\varphi\in \mathcal{A}_0^{d-1}(I,N,R)$ is a smooth simple curve, so all restriction estimates in \cite{Chen} are valid for these curves. 
Our main result is the following.
\begin{theorem}\label{MainRestrictionTheorem}
Let $a\in\mathbb{R}$, and $r>0$ be fixed, and let $I$ denote the  bounded interval $I=[a-r,a+r]$. Then for any $N\in\mathbb{N}$, and any
\[
1\leq p< \frac{d^2+d+2}{d^2+d}
\]
there exists $C=C(N,d,p)>0$ such that for any simple curve
\[
\g(t)=\left(t,\ldots,t^{d-1},\varphi(t)\right),
\]
with $\varphi\in \mathcal{A}_0^{d-1}(I,N,2^{2d+3}r)$ and any $f\in L^p(\R^d)$ the restriction estimate
\begin{equation}
    \|\mathcal{R}_\g(f)\|_{L^{q}(I;\lambda_\g dt)}\leq C\|f\|_{L^p(\R^d)},
\end{equation}
holds for $q=2p'/(d^2+d)$.
\end{theorem}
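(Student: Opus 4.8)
Throughout I write $p_\ast=\tfrac{d^2+d+2}{d^2+d}$ and $q=\tfrac{2p'}{d^2+d}$. The case $p=1$ is immediate: then $q=\infty$, and since $\varphi^{(d)}$ is real analytic it either vanishes identically --- so $\lambda_\gamma\equiv0$ and there is nothing to prove --- or has isolated zeros, in which case $\lambda_\gamma\,dt$ and $dt$ have the same null sets and $\|\widehat f\circ\gamma\|_{L^\infty(I)}\le\|\widehat f\|_{L^\infty(\R^d)}\le\|f\|_1$. So I assume $1<p<p_\ast$. The backbone is an affine scaling identity combined with the Chen--Fan--Wang estimate \cite{Chen} on short pieces. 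For a simple curve $\lambda_\gamma=c_d|\varphi^{(d)}|^{2/(d^2+d)}$, so degeneracy occurs only at zeros of $\varphi^{(d)}$; since $R=2^{2d+3}r$, on $I$ and on all concentric subdiscs $\varphi$ is polynomial-like of degree $\le M_0(N,d)$ (the Taylor tail beyond degree $M_0$ being geometrically negligible relative to the main part, by the frequency bound), and so is $\varphi^{(d)}$ --- in particular $\varphi^{(d)}$ has at most $M(N,d)$ zeros in $I$, and near a zero $z_0$ of order $\ell$ one has $|\varphi^{(d)}(t)|\sim_{N,d}|t-z_0|^{\ell}$ with constants depending only on $N,d$. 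I would first record the identity: for $\widetilde\gamma(s)=A\gamma(\lambda s+\mu)+b$ with $A\in\mathrm{GL}_d(\R)$,
\[
\|\mathcal R_{\widetilde\gamma}h\|_{L^q(\lambda_{\widetilde\gamma})}=|\det A|^{\,2/(q(d^2+d))}\|\mathcal R_\gamma\widetilde h\|_{L^q(\lambda_\gamma)},\qquad \|\widetilde h\|_p=|\det A|^{-1/p'}\|h\|_p,
\]
and $\tfrac{2}{q(d^2+d)}=\tfrac1{p'}$ precisely when $q=\tfrac{2p'}{d^2+d}$; hence on the critical line the restriction constant is an affine invariant. (Also, adding to $\varphi$ a polynomial of degree $\le d-1$ is an affine shear of the curve, changing neither the estimate nor the torsion, which is why normalising as in $\mathcal A^{d-1}_0$ costs nothing.)

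Next I would decompose $I$ into $O_{N,d}(1)$ subintervals: finitely many on which $|\varphi^{(d)}|$ oscillates only by a bounded factor, and, around each real zero $z_0$ of order $\ell$, the one-sided intervals $[z_0-\delta,z_0]$ and $[z_0,z_0+\delta]$ on which $z_0$ is the only zero. On a subinterval of the first kind one subdivides once more into $O_{N,d}(1)$ pieces where $|\varphi^{(d)}|$ varies by a factor $\le 2$, rescales such a piece to the unit interval via the map above (dividing the last coordinate by $\sup|\varphi^{(d)}|$ on the piece and the first $d-1$ coordinates by the natural shear--dilation), obtaining a smooth simple curve with torsion pinched between constants depending on $N,d$; the estimate of \cite{Chen} then applies with constant $C(N,d,p)$, and the endpoint invariance returns it with no loss. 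Summing these $O_{N,d}(1)$ contributions is harmless, so the whole difficulty is concentrated on an interval $[z_0,z_0+\delta]$ sitting at a degeneracy.

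On $[z_0,z_0+\delta]$ the plan is to decompose dyadically, $[z_0,z_0+\delta]=\bigcup_{j\ge0}I_j$ with $I_j=\{t:2^{-j-1}\delta<t-z_0\le2^{-j}\delta\}$. Each $I_j$, rescaled to the unit interval as before, becomes a curve $\Gamma_j$ with torsion $\sim_{N,d}1$ and bounded $C^d$-norm --- its normalised last coordinate converging, as $j\to\infty$, to the monomial $c\,s^{d+\ell}$ that is the leading Taylor term of $\varphi$ at $z_0$, the remaining terms being controlled by the bounded-frequency/doubling estimates --- so \cite{Chen} gives $\|\mathcal R_\gamma f\|_{L^q(I_j;\lambda_\gamma)}\le C(N,d,p)\|f\|_p$ uniformly in $j$, and, since $\mathcal R_\gamma f$ on $I_j$ depends only on $\widehat f$ over the slab $\{\xi_1\in I_j\}$ (the first coordinate of $\gamma(t)$ is $t$), one may replace $f$ by $f_j$ with $\widehat{f_j}=\widehat f\,\mathbf{1}_{\{\xi_1\in I_j\}}$. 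The slabs $\{\xi_1\in I_j\}$ are disjoint and dyadic in $\xi_1-z_0$, so, passing to the extension operator, Littlewood--Paley ($1<p'<\infty$) gives $\|\mathcal E_\gamma g\|_{p'}\lesssim\bigl\|(\sum_j|\mathcal E_\gamma(g\mathbf{1}_{I_j})|^2)^{1/2}\bigr\|_{p'}$, and it remains to prove the vector-valued bound
\[
\bigl\|\bigl(\textstyle\sum_j|\mathcal E_\gamma(g\mathbf{1}_{I_j})|^2\bigr)^{1/2}\bigr\|_{p'}\ \lesssim\ \|g\|_{L^{q'}(\lambda_\gamma)}
\]
(the supports on the right being disjoint). \textbf{I expect this uniform summation over scales to be the main obstacle.} For $q\ge2$ it follows from Minkowski's inequality ($p'\ge2$) and the per-piece bounds, but for $q<2$ --- which occurs in part of the admissible range --- one must either upgrade \cite{Chen} to an $\ell^2$-valued (Marcinkiewicz--Zygmund) estimate on the uniform family $\{\Gamma_j\}$, or compare $[z_0,z_0+\delta]$ with the polynomial model curve $(s,\dots,s^{d-1},s^{d+\ell})$, for which Stovall's theorem \cite{Stovall} supplies the scalar --- hence, again by Marcinkiewicz--Zygmund, the $\ell^2$-valued --- extension estimate on the whole half-line $[0,\infty)$; here the bounded-frequency hypothesis is exactly what makes $\gamma$ on $[z_0,z_0+\delta]$ a perturbation of this model that is negligible simultaneously at every scale, which is the uniformity that breaks down for Sj\"olin's curve. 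Once this is in place, summing the $O_{N,d}(1)$ pieces completes the proof.
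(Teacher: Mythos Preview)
Your overall architecture --- finite decomposition, dyadic annuli around degeneracies, Littlewood--Paley square function --- matches the paper's, and your Minkowski argument is correct for $q\ge 2$ (equivalently the Christ range $p\le\tfrac{d^2+d}{d^2+d-1}$). The genuine gap is the summation for $q<2$, which you correctly flag but do not resolve. Neither proposed fix works as stated. Marcinkiewicz--Zygmund transfers a scalar bound for a \emph{single} operator to an $\ell^2$-valued one; applying it to $\mathcal E_\gamma$ on the whole of $[z_0,z_0+\delta]$ is circular (that is precisely the bound you are proving), and there is no MZ principle for a \emph{family} $\{\mathcal E_{\Gamma_j}\}_j$ with each member acting on its own input $g_j$. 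The comparison with the polynomial model is not a usable perturbation argument either: restriction estimates have no general stability under $C^d$-small perturbations of the curve, and the phase error $e^{ix_d(\varphi(t)-c(t-z_0)^{d+\ell})}-1$ is not small for large $|x_d|$, so $\mathcal E_\gamma-\mathcal E_{\mathrm{model}}$ cannot be controlled in operator norm this way.

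The paper closes this gap differently: on each decomposed interval it establishes the geometric inequality $|J_\Gamma(t_1,\dots,t_d)|\gtrsim\prod_l|\varphi^{(d)}(t_l)|^{1/d}\prod_{k>l}|t_k-t_l|$ (which follows from the monomial comparison $|\varphi^{(d)}(t)|\sim|t-a_j|^{k_j}$, either by reduction to the polynomial case of \cite{DW} or by a direct induction on the recursive Jacobian formula for simple curves). This feeds into a $d$-linear estimate in the style of Christ and Stovall: for $f_l$ supported on dyadic annuli at scales $n_1\le\cdots\le n_D$ one gets $\bigl\|\prod_l\mathcal E_\gamma f_l\bigr\|_{p'/D}\lesssim 2^{-K(n_D-n_1)}\prod_l\|f_l\|_{L^{q'}(\lambda_\gamma)}$ with $K=K(d)>0$. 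It is this exponential off-diagonal decay --- not $\ell^2$-orthogonality alone --- that makes the sum over scales converge when $q<2$. A secondary issue: your decomposition handles only \emph{real} zeros of $\varphi^{(d)}$. For $\varphi^{(d)}(t)=t^2+\epsilon^2$ (polynomial, frequency $\le4$, no real zero) the oscillation on $[-1,1]$ is $\sim\epsilon^{-2}$, so your ``non-degenerate'' piece cannot be cut into $O_{N,d}(1)$ subpieces of ratio $\le2$. The paper's decomposition (Lemma D1 of \cite{DW} for the polynomial factor, plus a Harnack argument for the non-vanishing holomorphic factor) uses real parts of complex zeros as centres and delivers exactly the monomial comparison $|\varphi^{(d)}(t)|\sim|t-a_j|^{k_j}$ needed for the geometric inequality above.
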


We will use a decomposition scheme to prove Theorem \ref{MainRestrictionTheorem}. This was first done in \cite{DW} for polynomial curves. Here they showed that restriction can be reduced to having two properties of the curve;
\begin{enumerate}[i)]
    \item $\G(t_1,\ldots,t_d)=\sum_{i=1}^d\g(t_i)$ is $d!$-to-1.
    \item $|J_{\G}(t_1,\ldots,t_d)|\geq C\prod_{j=1}^d|L_\g(t_j)|^\frac1d\prod_{k>l}|t_k-t_l|$.
\end{enumerate}
Inequality $\mathrm{ii)}$ is known as \textit{the geometric inequality}. Uniform restriction estimates are then established by constructing a finite decomposition for polynomials curves on which these two properties hold. The same decomposition approach has successfully been used by de Dios Pont in \cite{DeDios1} to extend the uniform estimate of Stovall to complex polynomial curves. A further extension was later given by the same author in \cite{DeDios2}, where the decomposition was extended to include any algebraic extension of a $p$-adic field.

For our case, the torsion simplifies to $L_\g(t)=C_d \varphi^{(d)}(t)$, where $C_d$ is a positive constant only dependent on $d$. To prove Theorem \ref{MainRestrictionTheorem}, we will need the following decomposition theorem.
\begin{theorem}\label{MainDecompTheorem}
Let $a\in\mathbb{R}$, and $r>0$ be fixed, and let $I$ denote the  bounded interval $I=[a-r,a+r]$. For a fixed integer $N\in\mathbb{N}$, consider the function $\varphi\in\mathcal{A}_0^{d-1}(I,N,2^{d+3}r)$. There exists a finite decomposition,
\[
I=\bigcup_{j=1}^{M_{N}}I_j,
\]
into almost disjoint intervals such that $\varphi$ is single-signed on each $I_j$, there exist positive constants $c_j, C_j, K_j>0$, centres $a_j\in\mathbb{R}\backslash I_j$, and integers $k_j\leq 2N$ such that for each $t\in I_j$, 
\begin{equation}\label{PolyDecompEstimate}
    c_j|a_j-t|^{k_j}\leq |\varphi^{(d)}(t)|\leq C_j|a_j-t|^{k_j},
\end{equation}
and for each $(t_1,\ldots,t_d)\in I_j^d$,
\begin{equation}\label{GeometricInequality}
|J_{\Gamma}(t_1,\ldots,t_d)|\geq K_j\prod_{l=1}^d|\varphi^{(d)}(t_l)|^\frac{1}{d}\prod_{k>l}|t_k-t_l|.
\end{equation}
\end{theorem}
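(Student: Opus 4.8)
The plan is to reduce everything, on a disc only slightly smaller than $D(a,2^{d+3}r)$, to a statement about a single \emph{real polynomial} of degree at most $2N$, and then to apply the polynomial decomposition of Dendrinos--Wright \cite{DW}. Translating, we may take $a=0$; let $\Phi\in\mathrm{Hol}(D(0,2^{d+3}r))$ extend $\varphi$ with $N_\Phi(0,2^{d+3}r)\le N$. Since the torsion is $L_\g=C_d\varphi^{(d)}$, both \eqref{PolyDecompEstimate} and \eqref{GeometricInequality} involve only $\varphi^{(d)}=\Phi^{(d)}|_I$. \textbf{Step 1 (comparison with a polynomial).} The bounded frequency hypothesis — via monotonicity of the frequency function and the associated $L^2$-doubling (three-circles) estimate — forces $\Phi$ to grow at most polynomially of degree $O(N)$ on $D(0,2^{d+2}r)$; Cauchy's estimates then push this growth bound onto $\Phi^{(d)}$ on $D(0,2^{d+1}r)$, and Jensen's formula bounds the number of zeros of $\Phi^{(d)}$ (with multiplicity) in $D(0,2^{d}r)$ by some integer $k\le 2N$ (the conditions $\varphi(0)=\dots=\varphi^{(d-1)}(0)=0$, i.e. $\Phi$ vanishing to order $\ge d$ at $0$, are used to keep this count $\le 2N$). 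Gathering these zeros into a polynomial $Q$, which has real coefficients because $\Phi$ is real on $\R$ and hence its zeros are symmetric about $\R$, I would write $\Phi^{(d)}=Q\,e^{h}$ on a disc $D(0,\rho_0)$ with $\rho_0>r$, where $e^{h}$ is zero-free; bounding $\log|e^{h}|=\log|\Phi^{(d)}/Q|$ above by the growth estimate and below by Harnack's inequality (it is harmonic and zero-free on a slightly larger disc), then $h$ and all its derivatives by Borel--Carath\'eodory and Cauchy, one arranges $|e^{h}|\in[c,C]$ and $|h^{(m)}|\le C_m$ on $D(0,\rho_0)\supseteq I$, with all constants depending only on $N,d$.

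\textbf{Step 2 (the decomposition and \eqref{PolyDecompEstimate}).} With this comparison in hand I would apply the polynomial decomposition lemma of \cite{DW} to $Q$ on $I$, and then refine it by also cutting at the (at most $O(N)$) real zeros of $\varphi$ and of $\varphi^{(d)}$. This gives an almost disjoint decomposition $I=\bigcup_{j=1}^{M_N}I_j$ with $M_N=M_N(N,d)$, on each $I_j$ of which $\varphi$ and $\varphi^{(d)}$ are single-signed, together with centres $a_j\in\R\setminus I_j$ and integers $k_j\le 2N$ such that $\tilde c_j|a_j-t|^{k_j}\le|Q(t)|\le\tilde C_j|a_j-t|^{k_j}$ for $t\in I_j$. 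Since $|\varphi^{(d)}(t)|=|Q(t)|\,|e^{h(t)}|$ and $|e^{h}|\in[c,C]$, estimate \eqref{PolyDecompEstimate} follows with $c_j=c\,\tilde c_j$ and $C_j=C\,\tilde C_j$.

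\textbf{Step 3 (the geometric inequality).} Expanding $J_\G$ along its last row and using the classical identity expressing a generalised Vandermonde determinant through divided differences, one has, for a dimensional constant $c_d>0$,
\[
J_\G(t_1,\dots,t_d)=c_d\Big(\prod_{k>l}(t_k-t_l)\Big)\,(\varphi')[t_1,\dots,t_d],
\]
and by the Hermite--Genocchi formula $(\varphi')[t_1,\dots,t_d]$ equals, up to a positive constant depending only on $d$, the average of $\varphi^{(d)}$ over all convex combinations $\sum_i s_i t_i$ of $t_1,\dots,t_d$. Fix $j$ and $(t_1,\dots,t_d)\in I_j^{\,d}$; since $I_j$ is an interval every such combination lies in $I_j$, so the integrand has a constant sign. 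Pulling out that sign, applying the lower bound $|\varphi^{(d)}(t)|\ge c_j|a_j-t|^{k_j}$, then Jensen's inequality for the convex function $x\mapsto|x|^{k_j}$, and finally the arithmetic--geometric mean inequality (legitimate because $a_j\notin I_j$, so all the $a_j-t_i$ share a sign), one obtains
\[
\big|(\varphi')[t_1,\dots,t_d]\big|\ \gtrsim_d\ c_j\,\Big|\,a_j-\tfrac1d\sum_i t_i\,\Big|^{k_j}\ =\ \frac{c_j}{d^{k_j}}\Big(\sum_i|a_j-t_i|\Big)^{k_j}\ \ge\ c_j\prod_l|a_j-t_l|^{k_j/d}\ \ge\ \frac{c_j}{C_j}\prod_l|\varphi^{(d)}(t_l)|^{1/d},
\]
the last step using the upper bound in \eqref{PolyDecompEstimate}. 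Combined with the displayed identity for $J_\G$, this is \eqref{GeometricInequality} with $K_j$ a dimensional constant times $c_j/C_j$.

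The step I expect to be the main obstacle is Step~1: turning the qualitative bound $N_\Phi(0,2^{d+3}r)\le N$ into a genuinely polynomial comparison $\Phi^{(d)}=Q\,e^{h}$ with $\deg Q\le 2N$ and with quantitative, $r$-independent control on $e^{h}$ and all of its derivatives. The two-sided bound on $|e^{h}|$ in particular rests on a minimum-modulus estimate on a disc only slightly smaller than the one carrying the frequency bound — this is precisely where the geometric gap between $2^{d+3}r$ and the size of $I$ is consumed. Once this polynomial-like structure is secured, Step~2 is an application of \cite{DW} and Step~3 the short divided-difference computation above.
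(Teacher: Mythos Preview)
Your proof is correct. Steps~1--2 follow essentially the paper's route: both factor $\Phi^{(d)}=Q\cdot(\text{zero-free holomorphic})$ with $\deg Q=O_d(N)$, control the zero-free factor via Harnack, and then feed $Q$ into the Dendrinos--Wright polynomial decomposition. The paper packages Step~1 through the frequency-function machinery (Proposition~\ref{FreqPropProp}, Lemma~\ref{lemmaByFoster}) and in Proposition~\ref{HolDecomProp} insists on $|\psi(x)/\psi(y)-1|<\tfrac12$ on many small subintervals; your single global two-sided bound $|e^h|\in[c,C]$ on $I$ is actually sufficient and a little simpler, since only the ratio $C/c$ enters downstream. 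One cosmetic point: without a normalisation of $\Phi$ the constants $c,C$ themselves cannot depend only on $N,d$ (take $\Phi=Mz^d$), but their ratio does, and that is all that is used.

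Step~3 is genuinely different. The paper (Section~\ref{GeometricInequalitySection}) proves \eqref{GeometricInequality} via the recursive integral identity $J_\G=\Lambda_d$ of Corollary~\ref{DW-Corollary} and an induction on the level $l$ of $\Lambda_l$ (Proposition~\ref{GeoIneq}), repeatedly restricting each integration to its middle third and applying AM--GM with a carefully solved system of weights $\beta_i=(l-i)/l$. Your argument is shorter and more direct: the identity $J_\G=(d-1)!\prod_{k>l}(t_k-t_l)\cdot(\varphi')[t_1,\dots,t_d]$ together with Hermite--Genocchi collapses the Jacobian to a \emph{single} simplex average of $\varphi^{(d)}$, after which one Jensen step for $x\mapsto|x|^{k_j}$ and one AM--GM finish the job. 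This exploits the special structure of simple curves (constant minors $L_i$) more fully than the recursive scheme, which the paper inherits from the general-curve framework of \cite{DW}; conversely, the paper's approach makes the parallel with \cite{DW} explicit and would adapt more readily beyond the simple-curve setting.
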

A consequence of \eqref{PolyDecompEstimate} and \eqref{GeometricInequality} is that $\G(t)=\sum\g(t_i)$ is injective on $I_j^d$ whenever $t_{\sigma(1)}<\ldots <t_{\sigma(d)}$ for any permutation $\sigma$ on $\{1,\ldots,d\}$.
There is nothing in the reduction argument of Dendrinos and Wright which requires the curve to be a polynomial curve. In fact, given Theorem \ref{MainDecompTheorem} the reduction argument provided in Section 3 of \cite{DW} can be used to achieve a uniform restriction estimate over the class $\mathcal{A}_0^{(d-1)}(I,N,R)$ in the Christ range \eqref{ChristRange}.
Finally, the result can be extend to the full range \eqref{FullRange} by following the ideas of Stovall from \cite{Stovall}.

\subsection{Structure of the paper}

In Section \ref{Preliminaries} we give a brief discussion of the frequency function, focusing on properties relevant to the proof of Theorem \ref{MainDecompTheorem}. For a more detailed description we refer to \cite{Malinnikova}.

The proof of Theorem \ref{MainDecompTheorem} is divided into two parts. We first find a finite decomposition of $I$ such that \eqref{PolyDecompEstimate} holds on each subinterval $I_j$ in Section \ref{Decomposition}. The main idea here is to write any function in the class $\mathcal{A}(I,N,R)$ as the product of a polynomial of bounded degree and a holomorphic factor. The latter cannot oscillate too much in a small neighbourhood, and thus the original function is locally equivalent to a polynomial of bounded degree. 
In Section \ref{GeometricInequalitySection} we finalize the proof of Theorem \ref{MainDecompTheorem} by deducing the geometric inequality \eqref{GeometricInequality}. 
The proof is based on a recursive integral formula for the Jacobian involving the torsion and its minors, first provided in \cite{DW}. Through an induction argument we show that property \eqref{PolyDecompEstimate} implies \eqref{GeometricInequality} for any $C^d$ simple curve.

Finally, in Section \ref{Reduction} we
prove Theorem \ref{MainRestrictionTheorem} by combining Lemma $1$ in \cite{Chen} with the argument of Stovall found in Section $3$ and $4$ of \cite{Stovall}.
It suffices to prove Theorem \ref{MainRestrictionTheorem} for the range
\[
p\geq \frac{d^2+d}{d^2+d-1},
\]
as the Christ range follows directly from Section 3 of \cite{DW}.

\section*{Acknowledgement}
The authors would like to thank Stanford University for hosting and supporting the visit that led to this collaboration in the fall quarter of 2022. A special thanks to Professor Eugenia Malinnikova for suggesting the problem and for excellent support. Partial funding was received from the Trond Mohn Foundation project Pure Mathematics in Norway and the UCLA Dissertation Fellowship Award.
\section{Properties of the Frequency function}\label{Preliminaries}
The frequency function has been a useful tool in the theory of elliptic partial differential operators. If we consider a holomorphic function $u$ on a disc $D(z_0,R)$, the frequency function reduces to
\[
N_u(z_0,R)=2\frac{\sum_{n=1}^\infty n|c_n|^2R^{2n}}{\sum_{k=0}^\infty |c_k|^2R^{2k}},
\]
where $c_n$ is the $n^{\mathrm{th}}$ coefficient of the power series expansion of $u$ around $z_0$. If there exists $N>0$ such that $N_u(z_0,R)\leq N$, we say that $u$ is a holomorphic function of bounded frequency at the point $z_0$. For a fixed point $z_0\in\mathbb{C}$, the frequency function is a monotonically increasing function of $r$ as 
\[
\frac{\partial N_u(z_0,r)}{\partial r}>0.
\]

One of the key properties of a holomorphic function of bounded frequency is that it behaves similarly to a polynomial of bounded degree. A straightforward estimate gives that the frequency function of any polynomial $p$ of degree at most $N$ is bounded by $2N$. That is $N_p(z_0,R)\leq 2N$ for any centre $z_0\in\mathbb{C}$ and any radius $R>0$.
The following proposition lists properties of holomorphic functions of bounded frequency.
\begin{prop}\label{FreqPropProp}
Let $N\in\mathbb{N}$ and let $u$ be a holomorphic function on $D(z_0,R)$ such that $N_u(z_0,R)\leq N$. Then the following holds.
\begin{enumerate}[i)]
    \item Given $z\in D(z_0,R)$, there exists $C>0$ such that for any $0<r<R/2$ with $D(z,r)\subset D(z_0,R/2)$, we have
    \begin{equation}\label{FrequencyDoublingLemma}
        \sup_{\zeta\in D(z,r)}|u(\zeta)|\leq 2^{CN+1}\sup_{\zeta\in D\left(z,\frac{r}{2}\right)}|u(\zeta)|.
    \end{equation}
    \item If $u(0)=0$, then there exists a constant $C>0$ such that the frequency function of the derivative is bounded in a smaller disc, that is \begin{equation}\label{BoundedFreqDerivative}
    N_{u'}\left(z_0,\frac{R}{2}\right)\leq C(N+1).
    \end{equation}
    \item There exists a universal constant $1/2<c<3/4$ such that
    \begin{equation}\label{NumberOfZeros}
        \#\left\{z\in D\left(z_0,cR\right): u(z)=0\right\}\leq 2N.
    \end{equation}
    In particular $\#\left\{z\in D\left(z_0,R/2\right): u(z)=0\right\}\leq 2N$.
\end{enumerate}
\end{prop}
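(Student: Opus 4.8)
Throughout write $H_v(z,\rho):=\frac1{2\pi}\int_0^{2\pi}|v(z+\rho e^{i\theta})|^2\,d\theta=\sum_{n}|c_n(z)|^2\rho^{2n}$ for the $L^2$-mean of a holomorphic function $v$ (with $c_n(z)$ its Taylor coefficients at $z$), so that $\frac{d}{d\log\rho}\log H_v(z,\rho)=N_v(z,\rho)$, and $M_v(z,\rho):=\sup_{D(z,\rho)}|v|$. The plan uses the stated monotonicity of $\rho\mapsto N_u(z,\rho)$ together with the two elementary bounds $H_u(z,\rho)^{1/2}\le M_u(z,\rho)$ and $M_u(z,\rho)\le(1-(\rho/\sigma)^2)^{-1/2}H_u(z,\sigma)^{1/2}$ for $0<\rho<\sigma$ (the latter from $|u(w)|\le\sum_n|c_n(z)|\rho^n\le(\sum_n|c_n(z)|^2\sigma^{2n})^{1/2}(\sum_n(\rho/\sigma)^{2n})^{1/2}$ for $|w-z|\le\rho$), and, for iii), Jensen's formula. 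For i) I would first reduce to doubling centred at $z$: since $D(z,r)\subset D(z_0,R/2)$ forces $\overline{D(z,2r)}\subset D(z_0,R)$, the re-centring estimate for the frequency function (see \cite{Malinnikova}) gives $N_u(z,\rho)\le C_0N$ for $\rho\le\tfrac32 r$; monotonicity then yields $\log\frac{H_u(z,3r/2)}{H_u(z,r/2)}=\int_{r/2}^{3r/2}N_u(z,\rho)\frac{d\rho}{\rho}\le C_0N\log3$, and combining this with $M_u(z,r)\le(1-(2/3)^2)^{-1/2}H_u(z,3r/2)^{1/2}$ and $M_u(z,r/2)\ge H_u(z,r/2)^{1/2}$ gives $\frac{M_u(z,r)}{M_u(z,r/2)}\le\tfrac{3}{\sqrt5}\,3^{C_0N/2}\le 2^{CN+1}$ for a suitable $C$, which is \eqref{FrequencyDoublingLemma}.

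For ii) there is an essentially one-line argument once $N_{u'}$ is written out in coordinates. Translating, take $z_0=0$; since $u(0)=0$ we have $c_0=0$, so $u(z)=\sum_{n\ge1}c_nz^n$ and $u'(z)=\sum_{n\ge1}nc_nz^{n-1}$, and reindexing gives $N_{u'}(0,R/2)=\frac{2\sum_{n\ge1}(n-1)\phi(n)b_n}{\sum_{n\ge1}\phi(n)b_n}=2\big(\frac{\sum_{n\ge1}n\,\phi(n)b_n}{\sum_{n\ge1}\phi(n)b_n}-1\big)$, where $b_n:=|c_n|^2R^{2n}$ and $\phi(n):=n^2 4^{-n}$. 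The exponent in the radius $R/2$ is precisely what makes $\phi$ non-increasing on $\{1,2,\dots\}$, since $\phi(n+1)/\phi(n)=\tfrac14(1+1/n)^2\le1$ (for $R/\lambda$ one would need $\lambda\ge2$). Because $u(0)=0$ removes the $n=0$ term, the hypothesis $N_u(0,R)\le N$ reads $\sum_{n\ge1}nb_n\le\tfrac N2\sum_{n\ge1}b_n$, i.e. the weights $(b_n)$ have mean at most $N/2$; as $n$ is non-decreasing and $\phi$ non-increasing, Chebyshev's sum inequality gives $\frac{\sum n\phi(n)b_n}{\sum\phi(n)b_n}\le\frac{\sum nb_n}{\sum b_n}\le\frac N2$, hence $N_{u'}(0,R/2)\le N-2$, which is the claim with $C=1$. (The vanishing hypothesis is essential: without it the $n=0$ term reappears and the bound fails, as $u=1+z^M/M!$ shows.)

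For iii), translate so $z_0=0$ and factor off the zero at the origin (of order $\le N/2$, since $N_u(0,\rho)\ge 2\,\mathrm{ord}_0 u$), reducing to $u(0)\ne0$. With $L(\rho):=\frac1{2\pi}\int_0^{2\pi}\log|u(\rho e^{i\theta})|\,d\theta$, Jensen's formula says $L$ is convex in $\log\rho$ and $\frac{dL}{d\log\rho}(\rho)$ equals the number $n_u(\rho)$ of zeros of $u$ in $\overline{D(0,\rho)}$, so convexity gives $n_u(cR)\le\frac{L(R)-L(cR)}{\log(1/c)}$. The numerator is controlled by the frequency: $L(R)\le\tfrac12\log H_u(0,R)$ by concavity of $\log$, the Jensen defect obeys $\tfrac12\log H_u(0,cR)-L(cR)\le C_1 N_u(0,cR)$ (this is where the monotonicity re-enters — $n_u(\rho)$ eats into $\tfrac12 N_u(0,\rho)$ once zeros are enclosed), and $\tfrac12\log\frac{H_u(0,R)}{H_u(0,cR)}=\tfrac12\int_{cR}^R N_u(0,\rho)\frac{d\rho}{\rho}\le\tfrac12 N\log(1/c)$; together these give $n_u(cR)\le\big(\tfrac12+\tfrac{C_1}{\log(1/c)}\big)N$, which is $\le 2N$ once a universal $c\in(1/2,3/4)$ is fixed small enough, and the $D(0,R/2)$ statement follows since $R/2<cR$. (Alternatively iii) follows by approximating $u$ on a smaller disc by the degree-$O(N)$ truncation of its power series and invoking Rouché's theorem.)

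The real work is not in ii) — a convexity/rearrangement argument keyed to the radius $R/2$ — but in the two genuinely analytic inputs behind i) and iii): the re-centring estimate transferring a frequency bound at $z_0$ to a comparable bound at a nearby point $z$, and the domination of the Jensen defect by $N_u(0,\rho)$. Both are standard in the theory of the Almgren frequency function (see \cite{Malinnikova}) but require genuine arguments, and the remaining nuisance is arranging the admissible radii so that all the Cauchy estimates and re-centrings take place inside $D(z_0,R)$ — which, because part i) as stated only doubles for radii below $R/2$, is why the re-centring proof of i) (which in fact works up to a ratio slightly larger than $1/2$) is the convenient route.
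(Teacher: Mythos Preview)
The paper does not prove this proposition: immediately after the statement it writes ``We omit the proof, and instead refer to \cite{Malinnikova} and \cite{Han} for \eqref{NumberOfZeros}.'' So there is no argument to compare against; your proposal supplies strictly more than the paper does.

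Your treatment of ii) is self-contained and in fact sharper than what is claimed: the Chebyshev rearrangement (reweighting the measure $(b_n)_{n\ge1}$ by the non-increasing sequence $\phi(n)=n^24^{-n}$ can only lower the mean of the increasing function $n\mapsto n$) yields $N_{u'}(z_0,R/2)\le N-2$, well inside the asserted $C(N+1)$. The hypothesis should of course read $u(z_0)=0$, and you translate correctly. For i) and iii) your outline is sound, and you are candid that the two genuine analytic inputs --- the re-centring estimate $N_u(z,\rho)\le C_0 N$ for discs $D(z,\rho)$ well inside $D(z_0,R)$, and the comparison between $L(\rho)$ and $\tfrac12\log H_u(0,\rho)$ --- are exactly the content of the references the paper defers to. One caveat on iii): the inequality you label ``Jensen defect $\le C_1 N_u(0,cR)$'' is not a standard formulation and would itself require an argument; the Rouch\'e alternative you mention parenthetically (truncate the power series at degree $\sim N$ and compare on a slightly larger circle) is the clean route and is essentially the approach in \cite{Han}.
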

We omit the proof, and instead refer to \cite{Malinnikova} and \cite{Han} for \eqref{NumberOfZeros}.

From \eqref{NumberOfZeros} it follows that one can decompose any function of bounded frequency into the product of a polynomial of bounded degree and a non-vanishing holomorphic factor. Moreover, the non-vanishing holomorphic factor is also of bounded frequency in a smaller disc. We refer to the following lemma by Foster \cite{Foster}.
\begin{lemma}[Lemma $9$ in \cite{Foster}]\label{lemmaByFoster}
Assume that $N_u(z_0,R)\leq N$, and that $u=pf$, where $p$ is a polynomial and $f$ is a non-vanishing holomorphic function on $D(z_0,R/2)$. Then there exists a universal constant $C>0$ such that $N_f(z_0,R/2)\leq CN$.
\end{lemma}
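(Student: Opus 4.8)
The plan is to reformulate the frequency bound in terms of the $L^{2}$‑means
\[
H_g(r):=\frac{1}{2\pi}\int_0^{2\pi}\bigl|g(z_0+re^{i\theta})\bigr|^2\,d\theta ,
\]
for which $N_g(z_0,r)=rH_g'(r)/H_g(r)=\tfrac{d}{d\log r}\log H_g(r)$; since $\partial_r N_g(z_0,r)>0$, the map $\log r\mapsto\log H_g(r)$ is convex, and ``$N_g(z_0,\cdot)\le M$ on $(0,\rho)$'' is equivalent to the doubling inequality $H_g(r_2)\le (r_2/r_1)^{M}H_g(r_1)$ for $0<r_1\le r_2<\rho$. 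Translating so that $z_0=0$, it thus suffices to prove
\[
H_f(r_2)\le (r_2/r_1)^{CN}\,H_f(r_1)\qquad(0<r_1\le r_2<R/2)
\]
and then let $r_2\uparrow R/2$, using continuity and monotonicity of the frequency function. We may also assume $\deg p\le 2N$ and that every zero of $p$ is a zero of $u$ of the same order: by \eqref{NumberOfZeros} $u$ has at most $2N$ zeros in $D(0,R/2)$, and since $f=u/p$ is holomorphic and non‑vanishing there, the zeros of $p$ inside $D(0,R/2)$ are exactly those of $u$; this is precisely the situation in which the lemma is applied (cf.\ the decomposition into a polynomial of bounded degree and a non‑vanishing factor), and zeros of $p$ lying outside $D(0,R/2)$ are split off as a separate polynomial factor of degree $\le 2N$ that is non‑vanishing on $D(0,R/2)$, handled exactly like the polynomial part below.

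One of the two required inequalities is elementary. For a polynomial $q$ of degree $m$, Cauchy--Schwarz gives $\sup_{|z|=r}|q(z)|\le\sqrt{m+1}\,H_q(r)^{1/2}$, and one checks directly $N_q(z_0,\cdot)\le 2m$; in particular $p$ (and every polynomial factor arising above) is $O(N)$‑doubling. Consequently
\[
H_u(r)=\frac{1}{2\pi}\int_0^{2\pi}|p|^2|f|^2\,d\theta\le\Bigl(\sup_{|z|=r}|p(z)|^2\Bigr)H_f(r)\le (2N+1)\,H_p(r)\,H_f(r),
\]
so $H_f(r)\ge H_u(r)\big/\bigl((2N+1)H_p(r)\bigr)$. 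Combined with the frequency–doubling estimate \eqref{FrequencyDoublingLemma} for $u$ (available at every radius $<R/2$, in the $L^2$ form $H_u(2r)\le 2^{CN}H_u(r)$ for $2r<R$), this reduces everything to a single matching upper bound, namely $H_f(r)\lesssim_{N} H_u(2r)/H_p(r)$.

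This upper bound is the crux, and I expect it to be the only genuine difficulty. The naive estimate $H_f(r)\le H_u(r)/\inf_{|z|=r}|p|^2$ is worthless whenever $p$ (hence $u$) has a zero near the circle $|z|=r$; the way out is that every such zero of $p$ is a zero of $u$ of the same order, so that the logarithmic singularities cancel. Concretely, fixing $R'\in(R/2,R)$ with $R'$ large enough that $u$ has only $\le 2N$ zeros in $D(0,R')$ and writing $f=\tilde p\,g$, where $\tilde p$ is a polynomial of degree $\le 2N$ non‑vanishing on $D(0,R/2)$ and $g=u/(\text{polynomial built from the zeros of }u\text{ in }D(0,R'))$, the function $\log|g|=\log|u|-\sum_b k_b\log|z-b|$ is harmonic on $D(0,R')$. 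I would then argue: since $\log|g|$ is harmonic, $H_g(\rho)$ is pinched between $|g(0)|^2$ and $|g(0)|^2\exp\!\bigl(2\,\mathrm{osc}_{|z|=\rho}\log|g|\bigr)$, and a Harnack‑type estimate bounds $\mathrm{osc}_{|z|=\rho}\log|g|$ for $\rho\le R/2$ by a fixed multiple of $\log\!\bigl(\sup_{|z|=R'}|g|\big/|g(0)|\bigr)$, which in turn is controlled by $\sup_{|z|=R'}|u|$ via \eqref{FrequencyDoublingLemma}, by the $O(N)$‑doubling of $\tilde p$, and by the lower bound of the previous paragraph; together with the $O(N)$‑doubling of $\tilde p$ this yields $\log\!\bigl(H_f(R')/H_f(R/2)\bigr)\lesssim N$, hence the desired doubling for $f$. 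An alternative, more computational route removes the zeros of $u$ one at a time: if $u(a)=0$ then $u/(z-a)=\sum_i d_iz^i$ with $d_i=\sum_{j\ge0}c_{i+1+j}\,a^{\,j}$, while $N_u(z_0,R)\le N$ forces the tail bound $\sum_{m\ge n}|c_m|^2R^{2m}\le\tfrac{N}{2n}H_u(R)$; tracking this through the at most $2N$ successive divisions shows the frequency grows by $O(1)$ per step on a disc that contracts by at most a fixed factor. Either way, combining with the elementary lower bound and the polynomial estimates gives $N_f(z_0,r)\le CN$ for all $r<R/2$, and letting $r\uparrow R/2$ completes the proof.
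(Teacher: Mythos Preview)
The paper does not supply a proof of this lemma: it is quoted as Lemma~9 in \cite{Foster} and used as a black box, so there is no in-paper argument to compare your attempt against.

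Assessing your sketch on its own: the reformulation via $H_g(r)$ and the equivalence between a frequency bound and an $L^2$-doubling inequality is correct and is the natural setting. You are also right to flag that the lemma \emph{as literally stated}, with no hypothesis on $\deg p$, is false---take $u\equiv 1$, $p(z)=(z-2R)^M$, $f=1/p$; then $N_u=0$ while $N_f(0,R/2)\sim M$---and that in the paper's application $p$ is the polynomial built from the at most $2N$ zeros of $u$ in the half-disc, so $\deg p\le 2N$ may be assumed. The easy inequality $H_f(r)\ge H_u(r)/\bigl((2N{+}1)\,H_p(r)\bigr)$ is fine.

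The gap is in the hard direction. Your first route appeals to a ``Harnack-type estimate'' for $\log|g|$, but $\log|g|$ is harmonic of indefinite sign, so Harnack does not apply directly; what is actually needed is an interior oscillation bound for harmonic functions (e.g.\ via the Poisson kernel or gradient estimates), together with control of $\sup_{|z|=R'}\log|g|-\log|g(0)|$ from the doubling of $u$ and the polynomial bounds. This can be made to work, but none of it is written down. Your second route (dividing out the zeros one at a time) asserts that the frequency grows by $O(1)$ per division, yet the tail estimate you quote does not by itself deliver that; the computation controlling $N_{u/(z-a)}$ in terms of $N_u$ is missing. In short, the strategy and the reductions are sound, but the crucial upper bound $H_f(r)\lesssim_N H_u(2r)/H_p(r)$ remains a sketch rather than a proof.
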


\section{Decomposition}\label{Decomposition}
In this section we give a proof of Theorem \ref{MainDecompTheorem}, but without the geometric inequality \eqref{GeometricInequality}. The proof consists of two parts. The first part is showing that for any non-vanishing holomorphic function $\psi$ with bounded frequency in a disc at least four times the radius of the original interval $I$, there exists a decomposition of $I$ into almost disjoint intervals $I_k$ on which we can control the oscillation of $\psi$. In particular, using Lemma \ref{lemmaByFoster} we can show that any holomorphic function with bounded frequency in an even larger disc must be equivalent to a polynomial of bounded degree on sufficiently small subintervals of $I$.

The second part of the proof is showing that for any function $\varphi\in\mathcal{A}(I,N,R)$, there exists a decomposition of $I$ such that $\varphi$ is equivalent to a monomial on each subinterval. Here we refer to a decomposition given in \cite{DW} for polynomials. Taking the intersection of the intervals from these two decomposition schemes gives the required decomposition.
Let us therefore begin with the decomposition for non-vanshing holomorphic functions with bounded frequency.
\begin{prop}\label{HolDecomProp}
Let $I=[a-r,a+r]$ be a compact interval, and let $\psi\in \mathcal{A}(I,N,4r)$ be non-vanishing on $D(a,4r)\subset \mathbb{C}$. Then for each $\varepsilon>0$ there exists a finite decomposition $I=\bigcup_{j=1}^{M_{N,\varepsilon}} I_j$ such that for each $j$
\begin{equation*}
    \left|\frac{\psi(x)}{\psi(y)}-1\right|<\varepsilon,
\end{equation*}
for any $x,y\in I_j$. The number of intervals $M_{N,\varepsilon}$ depends only on $N$ and $\varepsilon$.
\end{prop}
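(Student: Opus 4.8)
The plan is to reduce to the normalized situation $a=0$, $r=1$ and then to bound the logarithmic derivative $\psi'/\psi$ on $I$ by a constant depending only on $N$; granting that, a partition of $I$ into intervals of a suitable fixed length does the rest. For the reduction I would use that the frequency function is invariant under affine rescaling of the variable: setting $\tilde\psi(w):=\psi(a+rw)$ turns the hypotheses into ``$\tilde\psi$ holomorphic and non-vanishing on $D(0,4)$ with $N_{\tilde\psi}(0,4)\le N$'', and neither the ratios $\psi(x)/\psi(y)$ nor the number of pieces of a partition change under $w\mapsto a+rw$. So it is enough to treat $I=[-1,1]$ with $\psi$ as above and to produce $C=C(N)$ with $\sup_{x\in[-1,1]}|\psi'(x)/\psi(x)|\le C$.

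The heart of the matter is this uniform bound. Since $\psi$ is non-vanishing on $D(0,4)$, $u:=\log|\psi|$ is harmonic there, and the point is that $u$ has oscillation $O_N(1)$ on discs of a fixed radius about points of $I$. To get this I would combine the doubling inequality \eqref{FrequencyDoublingLemma} — which gives $\sup_{D(\zeta,2\rho)}|\psi|\le 2^{CN+1}\sup_{D(\zeta,\rho)}|\psi|$, hence $\sup_{D(\zeta,2\rho)}u\le\sup_{D(\zeta,\rho)}u+A$ with $A:=(CN+1)\log 2$ — with Harnack's inequality applied to the non-negative harmonic function $\sup_{D(\zeta,2\rho)}u-u$ on $D(\zeta,2\rho)$, which yields $\sup_{D(\zeta,2\rho)}u-u(\zeta)\le 3A$. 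Taking $2\rho=3/4$ and letting $\zeta$ range over $\overline D(x,1/4)$ for a fixed $x\in I$ (legitimate since then $D(\zeta,3/4)\subset D(0,2)$), any two such centres satisfy $\zeta_2\in D(\zeta_1,3/4)$, so $|u(\zeta_1)-u(\zeta_2)|\le 3A$; thus the oscillation of $u$ on $\overline D(x,1/4)$ is $\le 3A$. Equivalently $\sup_{\overline D(x,1/4)}|\psi|\le 2^{3(CN+1)}\inf_{\overline D(x,1/4)}|\psi|$, and a Cauchy estimate on $\partial D(x,1/4)$ gives $|\psi'(x)|\le 4\sup_{\overline D(x,1/4)}|\psi|\le 4\cdot 2^{3(CN+1)}|\psi(x)|$, the desired bound.

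To conclude, note $\psi$ is non-vanishing on the simply connected disc $D(0,4)\supset I$, so it admits a holomorphic logarithm $L$ there with $L'=\psi'/\psi$. Assuming $\varepsilon\le 1$ (harmless), partition $I$ into $M:=\lceil 4C(N)/\varepsilon\rceil$ almost disjoint intervals $I_j$ of length $\le \varepsilon/(2C(N))$. For $x,y\in I_j$, integrating $L'$ along $[x,y]\subset I$ gives $|L(x)-L(y)|\le C(N)\cdot\varepsilon/(2C(N))=\varepsilon/2\le 1$, whence $|\psi(x)/\psi(y)-1|=|e^{L(x)-L(y)}-1|\le 2|L(x)-L(y)|<\varepsilon$. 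Since $M$ depends only on $N$ and $\varepsilon$, undoing the rescaling finishes the proof.

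The only genuinely non-formal ingredient is the uniform lower bound $\inf_{\overline D(x,1/4)}|\psi|\gtrsim_N\sup_{\overline D(x,1/4)}|\psi|$: a non-vanishing holomorphic function of bounded frequency cannot come close to zero on a fixed-size disc. This is exactly where the hypothesis $N_\psi\le N$ enters — via \eqref{FrequencyDoublingLemma} together with the harmonicity of $\log|\psi|$ and Harnack's inequality — and it explains why one assumes holomorphy and non-vanishing on a disc four times the size of $I$: the auxiliary discs used above must stay inside $D(a,4r)$. The scaling reduction, the Cauchy estimate, and the final chopping of the interval are all routine.
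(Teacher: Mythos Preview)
Your argument is correct. Both your proof and the paper's rest on the same two inputs --- the doubling inequality \eqref{FrequencyDoublingLemma} and Harnack's inequality applied to a harmonic function built from $\log|\psi|$ --- but they are organized differently. The paper normalizes so that $\sup_{D(a,4r)}|\psi|=1$, which makes $h:=-\log_2|\psi|$ a \emph{positive} harmonic function on $D(a,4r)$; it then applies Harnack to $h$ itself on the disc $D(a_j,r)$ about each subinterval centre, evaluated at points of the much smaller disc $D(a_j,2^{-k}r)$, so that the Harnack ratio $(1+2^{-k})^2/(1-2^{-k})^2$ is close to $1$, and combines this with the doubling bound $\inf_{D(a_j,2^{-k}r)}h\le (k+3)(CN+1)$ to obtain $|h(x)-h(y)|\lesssim 2^{-k}kN$ on each piece. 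You avoid the normalization and instead apply Harnack to the auxiliary nonnegative function $\sup u - u$ on a disc of \emph{fixed} radius, feeding in the doubling bound to control $\sup u - u(\zeta)$; the extra step through Cauchy's estimate then converts the oscillation bound on $\log|\psi|$ into the pointwise bound $|\psi'/\psi|\le C(N)$, after which the partition is immediate. Your route is arguably more conceptual --- ``$\psi'/\psi$ is bounded on $I$ by a constant depending only on $N$'' is a clean standalone statement --- and yields the transparent count $M_{N,\varepsilon}\sim C(N)/\varepsilon$, while the paper's argument bypasses derivatives entirely and works directly with values of $h$.
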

\begin{proof}
Without loss of generality, we may assume that $\sup_{z\in D(a,4r)}|\psi(z)|=1$. Since $\psi$ is of bounded frequency, it follows by \eqref{FrequencyDoublingLemma} in Proposition \ref{FreqPropProp} that
\[
\sup_{z\in D(x,2s)}|\psi(z)|\leq 2^{CN+1}\sup_{z\in D(x,s)}|\psi(z)|,
\]
for any $x\in I$, and $s<r$. In particular, for any $k\in\mathbb{N}$,
\[
1=\sup_{z\in D(a,4r)}|\psi(z)|\leq 2^{(k+2)(CN+1)}\sup_{z\in D(a,2^{-k}r)}|\psi(z)|.
\]
Fix some $k\in\mathbb{N}$, and divide the interval $I$ into $2^{k+1}$ subintervals $I_j$ of length $2^{-k}r$ with centres $a_j$. Then for each $j$ it follows that
\begin{equation}\label{BallEstimate}
    \begin{aligned}
   \sup_{z\in D(a_j,2^{-k}r)}|\psi(z)|\geq 2^{-(k+1)(CN+1)}\sup_{z\in D(a_j,2r)}|\psi(z)|&\geq 2^{-(k+1)(CN+1)}\sup_{z\in D(a,r)}|\psi(z)|
   \\&\geq 2^{-(k+3)(CN+1)}. 
\end{aligned}
\end{equation}

Define the function $
h(z)=-\log_2|\psi(z)|$ on $D(a,4r)$. This is a positive harmonic function as $\psi$ is a normalized non-vanishing holomorphic function on $D(a,4r)$. Moreover, by \eqref{BallEstimate},
\[
\inf_{z\in D(a_j,2^{-k}r)}h(z)\leq (k+3)(CN+1).
\]
By Harnack's inequality, we have
\begin{equation*} \frac{1-2^{-k}}{1+2^{-k}}h(a_j)\leq h(x)\leq \frac{1+2^{-k}}{1-2^{-k}}h(a_j),
\end{equation*}
for any $x\in D(a_j,2^{-k}r)\subset D(a_j,r)$. As such, we have
\[
\sup_{z\in D(a_j,2^{-k}r)}h(z)\leq \frac{(1+2^{-k})^2}{(1-2^{-k})^2}\inf_{z\in D(a_j,2^{-k}r)}h(z).
\]
Since $I_j\subset D(a_j,2^{-k}r)$, it follows that for any $x,y\in I_j$,
\begin{equation}\label{HarnackArgument}
\begin{aligned}
|h(x)-h(y)|
\leq& \sup_{z\in D(a_j,2^{-k}r)}h(z)-\inf_{z\in D(a_j,2^{-k}r)}h(z) 
\\
\leq& 2^{-k+4}\inf_{z\in D(a_j,2^{-k}r)}h(z)
\\
\leq& 2^{-k+4}(k+3)(CN+1).
\end{aligned}
\end{equation}
Thus, by choosing $k$ sufficiently large we can make the difference arbitrarily small.

Now let $\varepsilon>0$.
By the continuity of the exponential function there exists $\delta>0$, such that
\[
\left|\frac{\psi(x)}{\psi(y)}-1\right|=\left|2^{h(y)-h(x)}-1\right|<\varepsilon,
\]
whenever $|h(x)-h(y)|<\delta$. We see from \eqref{HarnackArgument} that this is achieved by Harnack's inequality by choosing $k=k(\varepsilon,N)\in\mathbb{N}$ such that
\[
2^{-k}(k+3)<\frac{\delta}{16(CN+1)},
\]
and decomposing $I$ into $2^{k+1}$ intervals of length $2^{-k}r$.
\end{proof}
\begin{remark}
From the proof it follows that the number of intervals $I_j$ is independent of the radius $r$. The only dependence on the length of the interval $I$ is in the size of the disc for which the function needs to have bounded frequency.
\end{remark}
As mentioned earlier, by Lemma \ref{lemmaByFoster} we note that any $\varphi\in \mathcal{A}(I,N,8r)$ can be written as $\varphi=\psi p$, where $p$ is a polynomial of bounded degree and $\psi\in\mathcal{A}(I,CN,4r)$. Proposition \ref{HolDecomProp} then gives that $\varphi$ is equivalent to the polynomial $p$ on each $I_j$ in the decomposition. Since a decomposition scheme for polynomials already exists, Theorem \ref{MainDecompTheorem} follows by combining these two decompositions. In order to continue, the following lemma is needed. 
\begin{lemma}[D1 in \cite{DW}]\label{D1Lemma}
Let $Q:J\to\R$ be a real polynomial of degree at most $N$. Then the interval $J$ can be decomposed into a finite number of open disjoint intervals,
\[
J=\bigcup_{j=1}^{M_{N}}I_j,
\]
so that on each $I_j$ there exist constants $c_j,C_j>0$ for which 
\[
c_j\left(A_j|t-a_j|^{k_j}\right)\leq|Q(t)|\leq C_j\left(A_j|t-a_j|^{k_j}\right),
\]
for all $t\in I_j$. Here $0\leq k_j\leq N$, $A_j\neq 0$, and the centres $a_j$ are the real parts of the zeros of $Q$ which are not contained in the interior of $I_j$. The constants $c_j,C_j$ and $M_{N}$ depend only on $N$.
\end{lemma}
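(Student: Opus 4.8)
This is a purely real‑variable statement about a single polynomial, independent of the frequency machinery developed above, and I would prove it by induction on $n:=\deg Q$. The cases $n=0$ (where $Q$ is a nonzero constant and one takes $k_j=0$) and $n=1$ are immediate. For the inductive step I factor $Q$ over $\mathbb{C}$ and split into two cases. If $Q$ has a real zero $a$ of multiplicity $\mu\ge 1$, write $Q(t)=(t-a)^{\mu}R(t)$ with $R(a)\neq0$ and $\deg R=n-\mu<n$, and cut $J$ at $a$; on each of the (at most two) halves the induction hypothesis for $R$ yields finitely many subintervals on which $|R(t)|\approx A|t-a_R|^{k_R}$, and there $|Q(t)|\approx A\,|t-a|^{\mu}|t-a_R|^{k_R}$, a product of two pure powers centred at $a$ and $a_R$ (both outside the interior of the subinterval). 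If $Q$ has no real zero, pick a zero $z=\alpha+i\beta$ with $\beta\neq0$, write $Q(t)=\bigl((t-\alpha)^2+\beta^2\bigr)R(t)$ with $\deg R=n-2$, and cut $J$ at $\alpha-|\beta|$ and $\alpha+|\beta|$. On the middle ``lens'' interval $|t-\alpha|\le|\beta|$ one has $(t-\alpha)^2+\beta^2\in[\beta^2,2\beta^2]$, so $|Q(t)|\approx\beta^2|R(t)|$ and the induction hypothesis for $R$ finishes directly; on the two outer intervals $(t-\alpha)^2+\beta^2\in[(t-\alpha)^2,2(t-\alpha)^2]$, so $|Q(t)|\approx|t-\alpha|^2|R(t)|$ and, after the induction hypothesis for $R$, we are again reduced to a product of two pure powers, now centred at $\alpha$ and $a_R$.

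The one genuine ingredient is an elementary two‑point sublemma: if $b_1,b_2\in\mathbb{R}$ lie outside the interior of an interval $I$ and $j_1,j_2\ge0$, then $I$ splits into at most two subintervals on each of which $|t-b_1|^{j_1}|t-b_2|^{j_2}$ is comparable to a single power $A|t-a|^{k}$ with $a\in\{b_1,b_2\}$ outside the interior. When $b_1,b_2$ lie on the same side of $I$, say $b_1\le b_2\le\inf I$, one cuts $I$ at the transition point $2b_2-b_1$ (if this point lies outside $I$, no cut is needed since one regime already covers all of $I$): on the part of $I$ nearer $b_2$ the factor $|t-b_1|$ stays between $b_2-b_1$ and $2(b_2-b_1)$, hence is constant up to a factor of two, so the product is $\approx(b_2-b_1)^{j_1}|t-b_2|^{j_2}$; on the other part $|t-b_1|\approx|t-b_2|$, so the product is $\approx|t-b_2|^{j_1+j_2}$. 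When $b_1,b_2$ lie on opposite sides of $I$, one simply cuts $I$ at its midpoint: on the half nearer $b_1$ the factor $|t-b_2|$ is comparable to a constant (as $b_2$ lies beyond the far endpoint, which is at least $|I|/2$ away), so the product is $\approx\text{const}\cdot|t-b_1|^{j_1}$, and symmetrically on the other half. Feeding this sublemma into each ``product of two powers'' situation above completes the induction.

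It remains to observe that the bookkeeping closes up correctly: every centre produced is a real part of a zero of $Q$ (the real zero $a$; the real part $\alpha$; or a centre $a_R$ inherited from $R$, whose zeros are zeros of $Q$), and it is never interior to the subinterval it is attached to; each exponent $k_j$ is a sum of root multiplicities, hence $\le\deg Q\le N$; and at each stage of the recursion the number of subintervals is multiplied by a fixed absolute constant, so the total count $M_N$ is bounded in terms of $N$ alone. The subintervals are open and disjoint by construction. The main obstacle, such as it is, is entirely organisational — keeping the centres ``admissible'' through the recursion (which is where one repeatedly invokes comparisons of the form $|t-u|\approx|t-b|$, valid once $t$ is far enough from the relevant endpoint) and tracking the interval count uniformly in $N$; there is no analytic difficulty. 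Since the statement is quoted verbatim as ``D1 in \cite{DW}'', one could alternatively invoke it directly.
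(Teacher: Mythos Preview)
The paper does not prove this lemma; it is quoted verbatim from Dendrinos--Wright (as ``D1 in \cite{DW}'') and used as a black box in the proof of Theorem~\ref{FirstDecompThm}. So there is no ``paper's own proof'' to compare against, and you yourself note at the end that one could simply invoke the citation.

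That said, your inductive argument is a correct and standard way to establish the result. The induction on $\deg Q$, the real/complex root dichotomy, and the reduction to a product of two pure powers are all sound, and your two--point sublemma is the right combinatorial device: in the same--side case the cut at $2b_2-b_1$ separates the regime where $|t-b_1|\approx b_2-b_1$ from the regime where $|t-b_1|\approx|t-b_2|$, and in the opposite--side case the midpoint cut makes the far factor comparable to a constant with ratio at most $2$. The bookkeeping closes as you describe: every centre produced is the real part of a root of $Q$ lying outside the interior of its subinterval, the exponent is a sum of multiplicities hence $\le N$, the comparison constants pick up at most a factor $2^{O(N)}$ per step over $O(N)$ steps, and the interval count satisfies $M_n\le C\,M_{n-1}$, giving $M_N\le C^N$.
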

With this decomposition lemma for polynomials, we are ready to prove the main theorem of this section.
\begin{theorem}\label{FirstDecompThm}
Let $N\in\mathbb{N}$ be a fixed integer, $I=[a-r,a+r]$ a compact interval, and consider $\varphi\in \mathcal{A}(I,N,8r)$. There exists a finite decomposition 
\[
I=\bigcup_{j=1}^{K_N}I_j,
\]
so that for each $j$ there exist constants $c_j,C_j>0$, $a_j\in\R\backslash I_j$, and $0\leq k_j\leq 2N$, such that 
\[
c_j|t-a_j|^{k_j}\leq |\varphi(t)|\leq C_j|t-a_j|^{k_j},
\]
for all $t\in I_j$. Moreover, the number of intervals $K_N$ depends only on $N$.
\end{theorem}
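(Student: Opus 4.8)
The plan is to prove Theorem \ref{FirstDecompThm} by assembling the three ingredients collected above: the factorization of a bounded‑frequency function as a polynomial of degree $\le 2N$ times a non‑vanishing bounded‑frequency factor (Proposition \ref{FreqPropProp}(iii) together with Lemma \ref{lemmaByFoster}), the slow‑variation decomposition for the non‑vanishing factor (Proposition \ref{HolDecomProp}), and the monomial decomposition for real polynomials (Lemma \ref{D1Lemma}). The decomposition in the conclusion will be the common refinement of the latter two decompositions, and the entire content of the proof is checking that each step keeps its combinatorial complexity controlled purely in terms of $N$.

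First I would produce the factorization. Since $\varphi\in\mathcal{A}(I,N,8r)$ it extends to $\Phi\in\mathrm{Hol}(D(a,8r))$ with $N_\Phi(a,8r)\le N$ and $\Phi\not\equiv 0$. By Proposition \ref{FreqPropProp}(iii) with $R=8r$ the function $\Phi$ has at most $2N$ zeros, counted with multiplicity, in $D(a,4r)=D(a,R/2)$; let $p$ be the monic polynomial whose roots are exactly these zeros with their multiplicities, so $0\le \deg p\le 2N$. Since $\Phi$ is real on the interval $I\subset\R$, Schwarz reflection gives $\overline{\Phi(\bar z)}=\Phi(z)$ on the symmetric disc $D(a,8r)$, hence the zeros of $\Phi$ in $D(a,4r)$ occur in conjugate pairs together with real zeros, so $p$ has real coefficients. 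Put $\psi:=\Phi/p$; the choice of multiplicities makes every singularity of $\psi$ in $D(a,4r)$ removable, so $\psi$ is holomorphic and non‑vanishing on $D(a,4r)$. Applying Lemma \ref{lemmaByFoster} with $u=\Phi$, $R=8r$ yields a universal $C>0$ with $N_\psi(a,4r)\le CN$, i.e. $\psi$ is a non‑vanishing holomorphic function of frequency at most $CN$ on $D(a,4r)$.

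Next I would run the two decompositions. Apply Proposition \ref{HolDecomProp} to $\psi$ with $\varepsilon=1/2$: this produces $I=\bigcup_{i=1}^{M}\tilde I_i$ with $M=M_{CN,1/2}$ depending only on $N$, such that $|\psi(x)/\psi(y)-1|<1/2$ for all $x,y\in\tilde I_i$; hence $\tfrac12|\psi(y)|\le|\psi(x)|\le\tfrac32|\psi(y)|$ on $\tilde I_i$, so on each $\tilde I_i$ the quantity $|\psi|$ is comparable, with absolute constants, to a positive number $\lambda_i$. Separately, apply Lemma \ref{D1Lemma} to the real polynomial $p$ on $J=I$: this produces open disjoint intervals $I=\bigcup_{l=1}^{M'}I'_l$ with $M'=M_{2N}$ depending only on $N$, constants $c'_l,C'_l>0$, exponents $0\le k'_l\le 2N$, nonzero $A'_l$, and centres $a'_l\notin I'_l$ (a real part of a zero of $p$ lying outside the interior of the open interval $I'_l$), such that $c'_lA'_l|t-a'_l|^{k'_l}\le|p(t)|\le C'_lA'_l|t-a'_l|^{k'_l}$ on $I'_l$. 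Now form the common refinement: the nonempty sets $I_j:=(\tilde I_{i(j)})^{\circ}\cap I'_{l(j)}$ are open, pairwise disjoint, number at most $MM'$ (indeed at most $M+M'-1$), and together cover $I$ up to the finitely many breakpoints, so they give the asserted decomposition into almost disjoint intervals. On each $I_j$ we have $|\varphi(t)|=|p(t)|\,|\psi(t)|$, and multiplying the two estimates gives
\begin{equation*}
\tfrac12\lambda_{i(j)}\,c'_{l(j)}A'_{l(j)}\,|t-a'_{l(j)}|^{k'_{l(j)}}\;\le\;|\varphi(t)|\;\le\;\tfrac32\lambda_{i(j)}\,C'_{l(j)}A'_{l(j)}\,|t-a'_{l(j)}|^{k'_{l(j)}},\qquad t\in I_j,
\end{equation*}
which is the claimed bound with $c_j:=\tfrac12\lambda_{i(j)}c'_{l(j)}A'_{l(j)}$, $C_j:=\tfrac32\lambda_{i(j)}C'_{l(j)}A'_{l(j)}$, $k_j:=k'_{l(j)}\in[0,2N]$, and $a_j:=a'_{l(j)}\notin I'_{l(j)}\supseteq I_j$. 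The number of intervals is $K_N\le M_{CN,1/2}\,M_{2N}$, which depends only on $N$.

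There is no serious analytic obstacle left once Proposition \ref{HolDecomProp} is available; the points that need a little care are essentially bookkeeping. One must check that $\psi$ genuinely satisfies the hypotheses of Proposition \ref{HolDecomProp} — holomorphic and non‑vanishing on the full disc $D(a,4r)$, with bounded frequency there — which is exactly what the removable‑singularity construction plus Lemma \ref{lemmaByFoster} provide. One must also keep track that the complexity stays $N$‑dependent only: Proposition \ref{FreqPropProp}(iii) bounds $\deg p\le 2N$; Lemma \ref{lemmaByFoster} turns the $N$‑bound on $N_\Phi$ into a $CN$‑bound on $N_\psi$; and then the interval counts $M_{CN,1/2}$ and $M_{2N}$, hence their product, depend only on $N$. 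Finally, the constants $c_j,C_j$ are allowed to depend on $\varphi$ and on $r$ (through $\lambda_{i(j)}$ and $A'_{l(j)}$); only $K_N$ must be uniform, and it is.
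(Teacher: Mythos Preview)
Your proof is correct and follows essentially the same approach as the paper: factor $\varphi=p\psi$ via Proposition~\ref{FreqPropProp}(iii) and Lemma~\ref{lemmaByFoster}, apply Proposition~\ref{HolDecomProp} with $\varepsilon=\tfrac12$ to the non-vanishing factor $\psi$, apply Lemma~\ref{D1Lemma} to the polynomial $p$, and combine. The only cosmetic difference is that you take a single common refinement of the two decompositions, whereas the paper applies Lemma~\ref{D1Lemma} separately on each interval from Proposition~\ref{HolDecomProp}; your Schwarz-reflection remark justifying that $p$ has real coefficients is a detail the paper leaves implicit.
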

\begin{proof}
By Lemma \ref{lemmaByFoster}, there exists a polynomial of degree at most $2N$, and a non-vanishing function of bounded frequency $\psi$ of absolute value at most $1$, such that
\[
\varphi(x)=p(x)\psi(x),
\]
on the disc $D(a,4r)$. Let $\varepsilon=1/2$, and apply Proposition \ref{HolDecomProp} to the function $\psi$. Then there exists a finite number of intervals such that
\[
I=\bigcup_{j=1}^{M_{N}}I_j,
\]
on which
\begin{equation*}
    \left|\frac{\psi(x)}{\psi(y)}-1\right|\leq \frac{1}{2},
\end{equation*}
for any $x,y\in I_j$.
Fix a point $x_j\in I_j$, and consider the function $\psi_j$,
\begin{equation*}
    \psi_j(t)=\frac{\psi(t)}{\psi(x_j)},
\end{equation*}
which is well-defined as $\psi$ is non-vanishing. Moreover, we have
\begin{equation*}
    \left|\psi_j(t)-1\right|=\left|\frac{\psi(t)}{\psi(x_j)}-1\right|\leq\frac{1}{2}.
\end{equation*}
This necessarily implies that $1/2\leq|\psi_j(t)|\leq 3/2$ on $I_j$.

For each $I_j$, consider the polynomial $p_j$ given by
\[
p_j(t)=\psi(x_j)p(t),
\]
so that $\varphi=\psi_jp_j$ on $I_j$. Apply Lemma \ref{D1Lemma} with respect to the polynomial $p_j$ to decompose $I_j$ further into $I_{j,i}$. Then on each $I_{j,i}$ there exist constants $c_i,C_i$ such that
\[
c_i\leq \frac{|p_j(t)|}{A_i|t-a_i|^{k_i}}\leq C_i,
\]
by Lemma \ref{D1Lemma}.
However, this implies
\begin{align*}
    \frac{c_i}{2}\leq\frac{|\varphi(t)|}{A_i|t-a_i|^{k_i}}\leq \frac{3}{2}C_i,
\end{align*}
as $\varphi=\psi_jp_j$. The proof follows by relabeling the intervals $I_{j,i}$.
\end{proof}

For each function $\varphi\in \mathcal{A}_0^{d-1}(I,N,2^{d+3}r)$ it follows that $\varphi^{(d)}\in\mathcal{A}(I,\mathcal{N}_{N,d},8r)$ by applying  \eqref{BoundedFreqDerivative} $d$ times. Here $\mathcal{N}_{N,d}$ is a positive constant depending only on $N$ and $d$. Moreover, by property \eqref{NumberOfZeros} we control the number of zeros of $\varphi^{(d)}$, and thus the number of connected components of $I$ on which $\varphi^{(d)}$ is single-signed. The first part of Theorem \ref{MainDecompTheorem} then follows from Theorem \ref{FirstDecompThm} applied to $\varphi^{(d)}$ and taking the intersection of all intervals in the decomposition with all connected components of $I$ on which $\varphi^{(d)}$ is single-signed.

\section{The Geometric Inequality}\label{GeometricInequalitySection}
In this section we will finalize the proof of Theorem \ref{MainDecompTheorem} by proving the geometric inequality \eqref{GeometricInequality}. We will use an induction argument based on a recursive integral formula for the Jacobian. This was first done in \cite{DW}, and we refer to Section $5$ in the same paper for a full derivation of the formula.
\begin{prop}\label{DW-Prop}
Given a curve $\gamma:I\to\R^d$, let $L_d=L_\g$ denote the torsion of the curve, and $L_i$ denote the determinant of the $i\times i$ minor matrix,
\[
L_i(t)=\det\begin{pmatrix}
\g_1'(t)&\ldots&\g_1^{(i)}(t)\\
\vdots&\ddots&\vdots\\
\g_i'(t)&\ldots&\g_i^{(i)}(t)
\end{pmatrix}.
\]
Define the map $\Gamma:I^d\to\R^d$ given by,
\[
\Gamma(t_1,\ldots,t_d)=\sum_{i=1}^d\g(t_i).
\]
Then the Jacobian of $\Gamma$ is given by
\[
J_\Gamma(t_1,\ldots,t_d)=\Lambda_d(t_1,\ldots,t_d),
\]
where $\Lambda_j$ satisfies the recursive formula,
\begin{align*}
    \Lambda_1(t)=&\frac{L_{d-2}(t)L_d(t)}{L_{d-1}(t)^2}\\
    \Lambda_j(t_1,\ldots,t_j)=&\prod_{i=1}^j\frac{L_{d-j-1}(t_i)L_{d-j+1}(t_i)}{L_{d-j}(t_i)^2}\int_{t_1}^{t_2}\ldots \int_{t_{j-1}}^{t_{j}}\Lambda_{j-1}(s_1,\ldots,s_{j-1})ds_{j-1}\ldots ds_1.
\end{align*}
Here the convention $L_0(t)=L_{-1}(t)\equiv 1$ is used.
\end{prop}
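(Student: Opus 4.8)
The plan is to prove the identity $J_\Gamma=\Lambda_d$ by induction on the dimension $d$. Each inductive step lowers the dimension by one: a determinant evaluated at the $d$ points $t_1,\dots,t_d$ is rewritten as a nested integral of a determinant evaluated at $d-1$ points, for an auxiliary curve $\tilde\gamma$ in $\R^{d-1}$ whose minors $\tilde L_k$ are explicit powers of $L_1$ times the minors of $\gamma$. Throughout one works on the region where $L_1,\dots,L_{d-1}$ do not vanish (these occur in denominators in the formula); for the curves relevant to Theorem~\ref{MainRestrictionTheorem} this is automatic, since there $\gamma_1'\equiv 1$ and $L_1,\dots,L_{d-1}$ are nonzero constants. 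The starting point is that $\partial\Gamma/\partial t_i=\gamma'(t_i)$, so the Jacobian matrix of $\Gamma$ has $i$-th column $\gamma'(t_i)$ and hence
\[
J_\Gamma(t_1,\dots,t_d)=\det\bigl(\gamma'(t_1),\dots,\gamma'(t_d)\bigr).
\]
The base case $d=1$ is immediate: $J_\Gamma(t_1)=\gamma_1'(t_1)=L_1(t_1)$, which is the $d=1$ value of $\Lambda_1=L_{-1}L_1/L_0^2$.

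For the inductive step, divide the $i$-th column of the spread determinant by $\gamma_1'(t_i)=L_1(t_i)$; the resulting matrix has first row identically $1$, and its columns are $\eta(t_i):=\gamma'(t_i)/\gamma_1'(t_i)$. Now, going through $k=d,d-1,\dots,2$ in this order, replace the $k$-th column by its difference with the (still unmodified) $(k-1)$-th column; this does not change the determinant and turns the first row into $(1,0,\dots,0)$. Expanding along the first row, then writing $\eta(t_k)-\eta(t_{k-1})=\int_{t_{k-1}}^{t_k}\eta'(s)\,ds$ and using multilinearity in the remaining columns, yields
\[
J_\Gamma(t_1,\dots,t_d)=\prod_{i=1}^d L_1(t_i)\int_{t_1}^{t_2}\!\!\cdots\!\int_{t_{d-1}}^{t_d}\det\bigl(\bar\eta'(s_1),\dots,\bar\eta'(s_{d-1})\bigr)\,ds_{d-1}\cdots ds_1,
\]
where $\bar\eta(t):=(\eta_2(t),\dots,\eta_d(t))\in\R^{d-1}$. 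Setting $\tilde\gamma:=\bar\eta$ and $\tilde\Gamma(s_1,\dots,s_{d-1}):=\sum_i\tilde\gamma(s_i)$, the inner determinant is precisely $J_{\tilde\Gamma}(s_1,\dots,s_{d-1})$.

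It remains to match the recursion for $\tilde\gamma$ in dimension $d-1$ with the one for $\gamma$ in dimension $d$. The key ingredient is the classical reduction-of-order identity $W(f_0,\dots,f_m)=f_0^{\,m+1}\,W\bigl((f_1/f_0)',\dots,(f_m/f_0)'\bigr)$ for Wronskians; applied with $f_0=\gamma_1'$ and $f_i=\gamma_{i+1}'$, and using $L_i=W(\gamma_1',\dots,\gamma_i')$, it gives $L_{k+1}=L_1^{\,k+1}\tilde L_k$, i.e.\ $\tilde L_k=L_{k+1}/L_1^{\,k+1}$. Substituting this into the recursion defining $\tilde\Lambda_j$, all powers of $L_1$ cancel: the base term $\tilde L_{d-3}\tilde L_{d-1}/\tilde L_{d-2}^2$ becomes $L_{d-2}L_d/L_{d-1}^2$, and the level-$j$ coefficient $\tilde L_{d-j-2}\tilde L_{d-j}/\tilde L_{d-j-1}^2$ becomes $L_{d-j-1}L_{d-j+1}/L_{d-j}^2$, so $\tilde\Lambda_j=\Lambda_j$ for every $1\le j\le d-1$. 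Feeding the inductive hypothesis $J_{\tilde\Gamma}=\tilde\Lambda_{d-1}=\Lambda_{d-1}$ into the displayed formula, and recognising $\prod_i L_1(t_i)=\prod_i L_{-1}(t_i)L_1(t_i)/L_0(t_i)^2$ as the $j=d$ coefficient of the recursion, we obtain $J_\Gamma=\Lambda_d$, which closes the induction.

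The main obstacle is the bookkeeping in this final step: one must check that the drop $d\to d-1$, the index shift $\tilde L_k=L_{k+1}/L_1^{k+1}$, and the accompanying powers of $L_1$ all conspire so that the recursion for the reduced curve reproduces that for $\gamma$ term by term, and that the base case of the recursion, not only the recursive step, survives the substitution. A minor but genuine point of care is the order in which the column subtractions are performed, which is what forces the integration limits to come out nested as $[t_1,t_2]\times\cdots\times[t_{d-1},t_d]$.
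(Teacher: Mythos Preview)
The paper does not actually prove this proposition; it simply quotes the formula and refers the reader to Section~5 of \cite{DW} for a derivation. Your argument is correct and is essentially the same approach carried out in \cite{DW}: reduce the spread determinant $\det(\gamma'(t_1),\dots,\gamma'(t_d))$ by dividing through by $\gamma_1'=L_1$, subtract adjacent columns to produce integrals of $\bar\eta'$, and then identify the resulting $(d-1)$-dimensional determinant with the Jacobian of an auxiliary curve whose minors relate to those of $\gamma$ via the reduction-of-order Wronskian identity $\tilde L_k=L_{k+1}/L_1^{k+1}$. Your verification that all powers of $L_1$ cancel in the coefficients $\tilde L_{d-j-2}\tilde L_{d-j}/\tilde L_{d-j-1}^2$ (since the exponents $(d-j-1)+(d-j+1)=2(d-j)$ match) and in the base term is exactly the bookkeeping needed, and your remark about the order of column subtractions is on point.
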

As mentioned earlier, for simple curves the torsion $L_\g(t)=C_d\varphi^{(d)}(t)$ has a rather simple expression. Moreover, for each $i\times i$ minor we have $L_i(t)=C_i>0$. This gives an immediate corollary of Proposition \ref{DW-Prop} which simplifies the recursive formula. It is this simplification which is exploited below in giving an independent proof of the geometric inequality for simple curves.
\begin{corollary}\label{DW-Corollary}
Let $\varphi\in C^d(I)$, and let $\g:I\to\R^d$ be a simple curve of the form
\[
\g(t)=\left(t,t^2,\ldots,t^{d-1},\varphi(t)\right).
\]
Then following the notation of Proposition \ref{DW-Prop},
\begin{equation*}
    J_\Gamma(t_1,\ldots,t_d)=\Lambda_d(t_1,\ldots,t_d),
\end{equation*}
where the $\Lambda_j$ satisfies the simplified recursive formula,
\begin{align*}
    \Lambda_1(t)=C_d\varphi^{(d)}(t),\quad
    \Lambda_j(t_1,\ldots,t_j)=C_{j,d}\int_{t_1}^{t_2}\ldots \int_{t_{j-1}}^{t_{j}}\Lambda_{j-1}(s_1,\ldots,s_{j-1})ds_{j-1}\ldots ds_1,
\end{align*}
where $C_{j,d}>0$ is a positive combinatorical constant only dependent on $j$ and $d$.
\end{corollary}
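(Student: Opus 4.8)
The plan is to reduce the statement to two elementary determinant computations and then substitute the results into the recursion of Proposition~\ref{DW-Prop}.

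First I would evaluate the minors $L_i$ for $i\le d-1$. For a simple curve the first $d-1$ coordinate functions are the monomials $\g_j(t)=t^j$, so $\g_j^{(k)}(t)=0$ whenever $k>j$. Hence the matrix $\big(\g_j^{(k)}(t)\big)_{1\le j,k\le i}$ defining $L_i$ is lower triangular for every $i\le d-1$, with diagonal entries $\g_j^{(j)}(t)=j!$; therefore $L_i(t)=\prod_{j=1}^{i}j!$ is a strictly positive constant independent of $t$, in accordance with the convention $L_0=L_{-1}\equiv1$. Next I would treat the torsion: in the $d\times d$ matrix defining $L_d$, the last column equals $\big(\g_1^{(d)}(t),\ldots,\g_{d-1}^{(d)}(t),\varphi^{(d)}(t)\big)^{\top}=(0,\ldots,0,\varphi^{(d)}(t))^{\top}$ for the same reason, so a cofactor expansion along that column gives $L_d(t)=\varphi^{(d)}(t)\,L_{d-1}(t)=C_d\,\varphi^{(d)}(t)$ with $C_d=\prod_{j=1}^{d-1}j!>0$.

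With these identities in hand I would feed them into the recursion of Proposition~\ref{DW-Prop}. For the base case, $\Lambda_1(t)=L_{d-2}(t)L_d(t)/L_{d-1}(t)^2$ is a positive constant times $\varphi^{(d)}(t)$; after renaming the constant this is $C_d\,\varphi^{(d)}(t)$. For $j\ge2$, every index appearing in the prefactor $\prod_{i=1}^{j}L_{d-j-1}(t_i)L_{d-j+1}(t_i)/L_{d-j}(t_i)^2$ is at most $d-j+1\le d-1$, so by the first step each of these minors is one of the positive constants $\prod_{m=1}^{\ell}m!$ (or $1$ by the convention); the prefactor is therefore a positive constant $C_{j,d}$ depending only on $j$ and $d$, and the recursion collapses to the stated form.

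I do not expect a genuine obstacle here: the corollary is in essence just the explicit form that Proposition~\ref{DW-Prop} takes for simple curves. The one point deserving a moment's care is the bookkeeping at the small end of the index range — checking that the appearances of $L_{-1}$ and $L_0$, together with the degenerate case $d=2$, are all absorbed by the convention $L_0=L_{-1}\equiv1$ — but this is a matter of careful indexing rather than a real difficulty.
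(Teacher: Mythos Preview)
Your proposal is correct and matches the paper's approach: the paper does not give a separate proof of this corollary, stating only that for simple curves $L_\g(t)=C_d\varphi^{(d)}(t)$ and $L_i(t)=C_i>0$ for $i\le d-1$, which is exactly what you compute and then substitute into the recursion of Proposition~\ref{DW-Prop}. Your bookkeeping remark about the small indices $L_{-1},L_0$ and the case $j=d$ is the only thing not said explicitly in the paper, and it is handled by the stated convention.
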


The following lemma will serve as a base case for the induction argument. It will also be used several times during the induction argument itself.
\begin{lemma}\label{SingleIntegralInequality}
Let $\alpha\in \mathbb{R}_+$ and $t<\tau$. Then for any $a\notin [t,\tau]$,
\begin{equation}\label{BaseCaseEq}
    \int_t^\tau|s-a|^\alpha ds\geq C_\alpha|t-a|^\frac{\alpha}{2}|\tau-a|^\frac{\alpha}{2}|\tau-t|.
\end{equation}
\begin{proof}
Since $a\notin [t,\tau]$, there exist integers $n,m\in\mathbb{Z}$ such that
\[
2^{n-1}\leq |\tau-a|<2^{n},\quad 2^{m-1}\leq |t-a|<2^{m}.
\]
If $a<t$, then $m\leq n$.
Assume first that $n\geq m+1$. Then
\begin{align*}
    \int_{t}^{\tau}|s-a|^{\alpha}ds
    \geq& |t-a|^{\frac{\alpha}{2}}\int_{t}^{\tau}|s-a|^{\frac{\alpha}{2}}ds\\
    =& \frac{2}{\alpha+2}|t-a|^{\frac{\alpha}{2}}\left(|\tau-a|^{\frac{\alpha}{2}+1}-|t-a|^{\frac{\alpha}{2}+1}\right)\\
    =& \frac{2}{\alpha+2}|t-a|^{\frac{\alpha}{2}}|\tau-a|^{\frac{\alpha}{2}+1}\left(1-\frac{|t-a|^{\frac{\alpha}{2}+1}}{|\tau-a|^{\frac{\alpha}{2}+1}}\right)\\
    \geq&
    \frac{2}{\alpha+2}|t-a|^{\frac{\alpha}{2}}|\tau-a|^{\frac{\alpha}{2}+1}\left(1-2^{\left(\frac{\alpha}{2}+1\right)(m-n)}\right)\\
    \geq&
    \frac{1}{\alpha+2}|t-a|^{\frac{\alpha}{2}}|\tau-a|^{\frac{\alpha}{2}}|\tau-t|,
\end{align*}
where we have used that $|\tau-a|\geq |\tau-t|$ since $a<t<\tau$.
This proves \eqref{BaseCaseEq} in the case $n\geq m+1$. If $n=m$, then
\begin{align*}
    \int_{t}^{\tau}|s-a|^{\alpha}ds
    \geq \frac{2^{n\frac{\alpha}{2}}2^{n\frac{\alpha}{2}}}{2^{\alpha}}(\tau-t)
    \geq \frac{1}{2^{\alpha}}|t-a|^\frac{\alpha}{2}|\tau-a|^\frac{\alpha}{2}|\tau-t|,
\end{align*}
which concludes the proof with
\begin{equation*}
    C_\alpha=\min\left\{\frac{1}{\alpha+2},\frac{1}{2^\alpha}\right\}.
\end{equation*}

If $a>\tau$, then $m\geq n$, and $|t-a|\geq |\tau-a|$. The proof then follows by a similar argument as for the case $a<t$.
\end{proof}
\end{lemma}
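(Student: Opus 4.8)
The plan is to reduce by reflection to the case $a<t$ and then split into two regimes according to the dyadic scale separation between $|t-a|$ and $|\tau-a|$, mirroring the structure of the announced constant $C_\alpha=\min\{1/(\alpha+2),2^{-\alpha}\}$. First I would assume $a<t<\tau$, so that $s\mapsto|s-a|=s-a$ is increasing on $[t,\tau]$ and, crucially, $|\tau-a|=(\tau-t)+(t-a)\ge|\tau-t|$ — this is the one place where the hypothesis $a\notin[t,\tau]$ is used. Fix integers $m\le n$ with $2^{m-1}\le|t-a|<2^{m}$ and $2^{n-1}\le|\tau-a|<2^{n}$.

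In the separated regime $n\ge m+1$, I would peel off one half-power of the distance, using $|s-a|^\alpha\ge|t-a|^{\alpha/2}|s-a|^{\alpha/2}$ on $[t,\tau]$, and then integrate the remaining exactly-integrable power:
\[
\int_t^\tau|s-a|^{\alpha/2}\,ds=\frac{2}{\alpha+2}\Bigl(|\tau-a|^{\alpha/2+1}-|t-a|^{\alpha/2+1}\Bigr)=\frac{2}{\alpha+2}|\tau-a|^{\alpha/2+1}\Bigl(1-\bigl(\tfrac{|t-a|}{|\tau-a|}\bigr)^{\alpha/2+1}\Bigr).
\]
Since $|t-a|/|\tau-a|<2^{m-n}\le 2^{-1}$, the subtracted term is at most $2^{-(\alpha/2+1)}<1/2$ for every $\alpha>0$, so the right-hand side is at least $\tfrac1{\alpha+2}|\tau-a|^{\alpha/2+1}$; multiplying back by $|t-a|^{\alpha/2}$ and using $|\tau-a|\ge|\tau-t|$ yields the claim with constant $1/(\alpha+2)$. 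In the balanced regime $n=m$, both $|t-a|$ and $|\tau-a|$ lie in $[2^{n-1},2^{n})$, so on $[t,\tau]$ one has $|s-a|^\alpha\ge|t-a|^\alpha\ge 2^{(n-1)\alpha}$ while $|t-a|^{\alpha/2}|\tau-a|^{\alpha/2}<2^{n\alpha}$; integrating the constant lower bound over $[t,\tau]$ gives $\int_t^\tau|s-a|^\alpha\,ds\ge 2^{-\alpha}|t-a|^{\alpha/2}|\tau-a|^{\alpha/2}(\tau-t)$. Taking $C_\alpha=\min\{1/(\alpha+2),2^{-\alpha}\}$ covers both regimes.

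Finally, the case $a>\tau$ follows from the previous one via the substitution $s\mapsto(t+\tau)-s$, which fixes the interval $[t,\tau]$, replaces $a$ by $a'=(t+\tau)-a<t$, and satisfies $|t-a'|=|\tau-a|$ and $|\tau-a'|=|t-a|$; applying the $a<t$ case to $a'$ reproduces exactly the desired inequality. I do not expect a genuine obstacle: this is elementary calculus. The only points requiring a little care are splitting at the dyadic scale — which is really just arranging $|t-a|\le|\tau-a|/2$ in the ``separated'' case — so that the correction term $(|t-a|/|\tau-a|)^{\alpha/2+1}$ in the exact integral stays bounded away from $1$ uniformly in $\alpha\in\mathbb{R}_+$, and tracking the elementary inequality $|\tau-a|\ge|\tau-t|$ (respectively $|t-a|\ge|\tau-t|$ after reflection) so that one half-power of the distance to $a$ can legitimately be traded for the interval length $|\tau-t|$.
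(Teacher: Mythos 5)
Your argument is correct and essentially identical to the paper's: the same dyadic bracketing of $|t-a|$ and $|\tau-a|$, the same peeling of the half-power $|t-a|^{\alpha/2}$ followed by exact integration of $|s-a|^{\alpha/2}$ in the separated regime, the same constant lower bound $|s-a|\ge|t-a|$ in the balanced regime, and the same constant $C_\alpha=\min\{1/(\alpha+2),2^{-\alpha}\}$ (your ratio bound $|t-a|/|\tau-a|<2^{m-n}$ even reproduces the paper's harmless dyadic off-by-one at $n=m+1$, which, as your closing remark indicates, is repaired by splitting directly at $|t-a|\le|\tau-a|/2$). The only genuine difference is cosmetic and welcome: you handle $a>\tau$ explicitly via the reflection $s\mapsto(t+\tau)-s$, which swaps $|t-a|$ and $|\tau-a|$, where the paper simply appeals to ``a similar argument''.
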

\begin{prop}\label{GeoIneq}
Let $\gamma:I\to\R^d$ be a simple curve of the form\[
\g(t)=\left(t,t^2,\ldots,t^{d-1},\varphi(t)\right),\]
for some $\varphi\in C^d(I)$.
Assume that there exists an interval $I_0\subset I$ such that $\varphi^{(d)}$ is single-signed, and that there exist constants $C_1,C_2>0$, $\kappa\in\mathbb{Z}_+$ and $a\in \mathbb{R}\backslash I_0$ such that for any $t\in I_0$ 
\[
C_1|t-a|^\kappa\leq |\varphi^{(d)}(t)|\leq C_2|t-a|^\kappa.
\]
Then for any $(t_1,\ldots,t_d)\in I_0^d$ 
\[
|J_\Gamma(t_1,\ldots,t_d)|\geq C\prod_{j=1}^d|\varphi^{(d)}(t_j)|^{\frac{1}{d}}\prod_{l<k}|t_k-t_l|.
\]
\end{prop}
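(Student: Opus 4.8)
The plan is to unwind the recursion of Corollary~\ref{DW-Corollary} and to prove, by induction on $j$, a geometric-inequality-type lower bound for each $\Lambda_j$, with Lemma~\ref{SingleIntegralInequality} (and elementary variants of it) as the engine.

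First I would reduce to strictly ordered tuples. Since $J_\Gamma$ changes sign under a transposition of the $t_i$ while both sides of the asserted inequality are symmetric and vanish whenever two of the $t_i$ coincide, it is enough to prove the bound for $t_1<t_2<\dots<t_d$ in $I_0$; fixing such an ordering, each integral $\int_{t_m}^{t_{m+1}}$ produced by Corollary~\ref{DW-Corollary} runs over a genuine subinterval of the interval $I_0$, and $a\notin[t_1,t_d]$, so Lemma~\ref{SingleIntegralInequality} applies on every such subinterval. I would also record that for ordered arguments each $\Lambda_j$ has a fixed sign: $\Lambda_1=C_d\varphi^{(d)}$ is single-signed on $I_0$ by hypothesis, and if $\Lambda_{j-1}$ is single-signed on $\{s_1<\dots<s_{j-1}\}$ then on the box $[t_1,t_2]\times\dots\times[t_{j-1},t_j]$ one has $s_1\le\dots\le s_{j-1}$ automatically, so the positive multiple of an integral defining $\Lambda_j$ keeps that sign. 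Hence absolute values pass through all the nested integrals, and the problem becomes a lower bound for $\int_{t_1}^{t_2}\!\cdots\!\int_{t_{j-1}}^{t_j}|\Lambda_{j-1}|$.

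The inductive claim I would carry is that for $t_1<\dots<t_j$ in $I_0$,
\[
|\Lambda_j(t_1,\dots,t_j)|\ \ge\ c_j\prod_{i=1}^{j}|t_i-a|^{\kappa/j}\prod_{1\le l<k\le j}(t_k-t_l),
\]
with $c_j>0$ depending only on $C_1,\kappa,d,j$. The case $j=1$ is $|\Lambda_1(t)|=C_d|\varphi^{(d)}(t)|\ge C_dC_1|t-a|^{\kappa}$, and the case $j=d$, together with $|t_i-a|^{\kappa/d}\ge C_2^{-1/d}|\varphi^{(d)}(t_i)|^{1/d}$ from the upper bound on $\varphi^{(d)}$, is the proposition. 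In the inductive step I insert the bound for $\Lambda_{j-1}$ and evaluate the resulting iterated integral from the innermost variable $s_{j-1}$ outward. At the stage when $s_m$ is integrated over $[t_m,t_{m+1}]$, every factor of the current integrand depending on $s_m$ has the form $|s_m-b|$ with $b\notin(t_m,t_{m+1})$: here $b$ is $a$, or a not-yet-integrated $s_l$ with $l<m$ (lying left of $t_m$), or a parameter $t_k$ with $k>m$ created at an earlier stage (lying right of $t_{m+1}$). I expand each such difference against the nearer endpoint, using $s_m-s_l=(s_m-t_m)+(t_m-s_l)$ and $t_k-s_m=(t_k-t_{m+1})+(t_{m+1}-s_m)$; by the ordering the new summands are all nonnegative, so I may retain a convenient subcollection, and each retained summand factors as a quantity free of $s_m$ times $|s_m-a|^{\kappa/(j-1)}(s_m-t_m)^{e_1}(t_{m+1}-s_m)^{e_2}$. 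A weighted, endpoint-augmented form of Lemma~\ref{SingleIntegralInequality} --- obtained from it by the same dyadic-scale argument (to split the weight in an arbitrary ratio) and a restriction to the middle third of $[t_m,t_{m+1}]$, where $(s_m-t_m)$ and $(t_{m+1}-s_m)$ are each comparable to $(t_{m+1}-t_m)$ --- then bounds the $s_m$-integral below by $|t_m-a|^{\alpha\theta_m}|t_{m+1}-a|^{\alpha(1-\theta_m)}(t_{m+1}-t_m)^{e_1+e_2+1}$ (up to a constant), with $\alpha=\kappa/(j-1)$ and $\theta_m\in[0,1]$ free; taking $\theta_m=(j-m)/j$ makes the exponents gathered on each $|t_i-a|$ over the one or two stages in which $t_i$ is an endpoint add up to $\kappa/j$. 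Finally, the length factors $(t_{m+1}-t_m)$ emitted by the integrations, the inherited differences $s_k-s_l$ promoted to $t_k-t_l$, and the telescoping $(t_k-t_l)=(t_k-t_{k-1})+\dots+(t_{l+1}-t_l)$ built into the expansions should reassemble into the full length-$j$ Vandermonde; I have verified that this works in the first step $j=2\to j=3$.

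The step I expect to be the main obstacle is exactly this last accounting for general $j$: arranging the expansions so that every $|t_i-a|$-exponent balances and every Vandermonde factor $(t_k-t_l)$ is recovered, uniformly in a possibly very unevenly spaced configuration of the $t_i$. The count $\binom{j-1}{2}+(j-1)=\binom{j}{2}$ --- the $\binom{j-1}{2}$ differences inherited from $\Lambda_{j-1}$ plus the $j-1$ integration lengths, against the $\binom{j}{2}$ differences wanted for $\Lambda_j$ --- shows the bookkeeping is possible in principle, and I would make it rigorous by an auxiliary induction on the number of variables already eliminated.
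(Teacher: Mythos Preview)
Your strategy is exactly the paper's: the same induction on $j$ with the claim $|\Lambda_j|\ge c_j\prod_i|t_i-a|^{\kappa/j}\prod_{l<k}(t_k-t_l)$, the same restriction to the middle third of each $[t_m,t_{m+1}]$, and the same balancing weights $\theta_m=(j-m)/j$ (the paper writes them as $\beta_m$ and reaches them via the AM--GM inequality applied to the convex combinations $t_k''=\tfrac{2t_k+t_{k+1}}{3}$, $t_{k+1}'=\tfrac{t_k+2t_{k+1}}{3}$, rather than via a weighted version of Lemma~\ref{SingleIntegralInequality}). The only place you diverge is precisely the step you flag as the obstacle --- the Vandermonde bookkeeping --- and there the paper's move is considerably simpler than your telescoping scheme. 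Once every $s_k$ is confined to its middle third $[t_k'',t_{k+1}']$, one has directly for $i<k$
\[
s_k-s_i\ \ge\ t_k''-t_{i+1}'\ =\ \frac{t_{k+1}-t_i}{3}+\frac{2(t_k-t_{i+1})}{3}\ \ge\ \frac{t_{k+1}-t_i}{3},
\]
so the inherited product $\prod_{1\le i<k\le j-1}(s_k-s_i)$ is bounded below by the $s$-free constant $3^{-\binom{j-1}{2}}\prod_{1\le i<k\le j-1}(t_{k+1}-t_i)$, which is exactly the ``index-gap $\ge 2$'' portion of the length-$j$ Vandermonde. The $(j-1)$-fold integral then decouples into $\prod_{k}\int_{t_k''}^{t_{k+1}'}|s_k-a|^{\kappa/(j-1)}\,ds_k$, each of which contributes the missing nearest-neighbour factor $(t_{k+1}-t_k)/3$ via Lemma~\ref{SingleIntegralInequality}, and the full Vandermonde drops out with no further accounting. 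This removes the auxiliary induction you were anticipating.
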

\begin{proof}
Since rearranging $t_i$ in the Jacobian only affects the sign, we may without loss of generality assume $t_1\leq t_2\ldots\leq t_d$. Moreover, as $\varphi^{(d)}$ is singled-signed on $I_0$, we also have,
\[
|\Lambda_2(t_1,t_2)|=C_d\int_{t_1}^{t_2}|\varphi^{(d)}(s)|ds,
\]
and so we may safely assume that all $\Lambda_l$ are positive. 

By corollary \ref{DW-Corollary}, the proof is complete if we can show that
\begin{equation}\label{inductionStep}
\Lambda_l(t_1,\ldots,t_l)\geq C\prod_{j=1}^l|\varphi^{(d)}(t_j)|^\frac{1}{l}\prod_{1\leq i<k\leq l}|t_k-t_i|
\end{equation}
for all $l$. This will be verified with an induction argument.

We start with the base case $l=2$. This follows from Lemma \ref{SingleIntegralInequality}, since
\begin{align*}
    \Lambda_2(t_1,t_2)=C_d\int_{t_1}^{t_2}|\varphi^{(d)}(s)|ds
    \geq& C \int_{t_1}^{t_2}|s-a|^{\kappa}ds\\
    \geq&
    C|t_1-a|^{\frac{\kappa}{2}}|t_2-a|^{\frac{\kappa}{2}}|t_2-t_1|
    \\
    \geq&
    C|\varphi^{(d)}(t_1)|^{\frac{1}{2}}|\varphi^{(d)}(t_2)|^{\frac{1}{2}}|t_2-t_1|.
\end{align*}

Assume now that \eqref{inductionStep} holds for some $l-1$. Then
\begin{align}\label{LambaIntegral1}
    \Lambda_l(t_1,\ldots,t_l)
    =&C
    \int_{t_{l-1}}^{t_l}\ldots\int_{t_1}^{t_2}\Lambda_{l-1}(s_1,\ldots,s_{l-1})ds_1\ldots ds_{l-1}\nonumber\\ 
    \geq&C
    \int_{t_{l-1}}^{t_l}\ldots\int_{t_1}^{t_2}\prod_{j=1}^{l-1}|\varphi^{(d)}(t_j)|^\frac{1}{l-1}\prod_{1\leq i<k\leq l-1}|s_k-s_i|ds_1\ldots ds_{l-1}\nonumber\\
    \geq& C
    \int_{t_{l-1}''}^{t_l'}\ldots\int_{t_1''}^{t_2'}\prod_{j=1}^{l-1}|s_j-a|^\frac{\kappa}{l-1}\prod_{1\leq i<k\leq l-1}|s_k-s_i|ds_1\ldots ds_{l-1},
\end{align}
where the points $t_i'$ and $t_i''$ are defined in such a way that we only integrate over the middle third of each interval, that is
\begin{align*}
    t_i':=&\frac{2t_{i}+t_{i-1}}{3},\quad 2\leq i\leq l,  
    \\t_{i}'':=&\frac{t_{i+1}+2t_i}{3},\quad 1\leq i\leq l-1.
\end{align*}

For any choice of $1\leq i<j\leq l-1$, consider  $s_i\in[t_i'',t_{i+1}']$, and $s_j\in[t_j'',t_{j+1}']$. Since $j>i$, it then follows that
\[
s_j-s_i\geq t_{j}''-t_{i+1}'=\frac{t_{j+1}+2t_j}{3}-\frac{2t_{i+1}+t_{i}}{3}=\frac{t_{j+1}-t_i}{3}+2\frac{t_{j}-t_{i+1}}{3}
\geq \frac{t_{j+1}-t_i}{3},\]
where we used that $t_j\geq t_{i+1}$. This implies
\[
\prod_{1\leq i<j\leq l-1}|s_j-s_i|\geq 3^{-\frac{(l-1)(l-2)}{2}}\prod_{1\leq i<j\leq l-1}|t_{j+1}-t_i|,
\]
which is the part of the Vandermonde determinant for which the indices differ by at least two.
Inserted into \eqref{LambaIntegral1}, and applying Lemma \ref{SingleIntegralInequality} with $\alpha=\kappa/(l-1)$, we see that
\begin{align}\label{LambaIntegral2}
    \Lambda_l(t_1,\ldots,t_l)
    \geq&
    C\prod_{1\leq i<j\leq l-1}|t_{j+1}-t_i|\prod_{k=1}^{l-1}\int_{t''_k}^{t'_{k+1}}|s_k-a|^\frac{\kappa}{l-1}ds_k
    \nonumber\\
    \geq&
    C\prod_{1\leq i<j\leq l-1}|t_{j+1}-t_i|\prod_{k=1}^{l-1}|t_{k+1}'-a|^\frac{\kappa}{2(l-1)}|t_{k}''-a|^{\frac{\kappa}{2(l-1)}}|t'_{k+1}-t_{k}''|.
\end{align}
Note that for any $1\leq k\leq l-1$,
\[
t_{k+1}'-t_k''=\frac{1}{3}(2t_{k+1}+t_k-(t_{k+1}+2t_k))=\frac{t_{k+1}-t_k}{3},
\]
which reduces \eqref{LambaIntegral2} to
\begin{equation}\label{LambdaIntegral3}
    \Lambda_l(t_1,\ldots,t_l)
    \geq
    \frac{C}{3^{l-1}}\prod_{1\leq i<j\leq l-1}|t_{j}-t_i|\prod_{k=1}^{l-1}|t_{k+1}'-a|^\frac{\kappa}{2(l-1)}|t_{k}''-a|^{\frac{\kappa}{2(l-1)}}.
\end{equation}
Moreover, for fixed $k$ and any $0<\beta_k< 1$, we have
\begin{align*}
    (t'_{k+1}-a)^\frac{\kappa}{2(l-1)}=\left(\frac{(t_k-a)+2(t_{k+1}-a)}{3}\right)^\frac{\kappa}{2(l-1)}
\geq&
C_{\beta_k}\left(\beta_k (t_{k}-a)+(1-\beta_k)(t_{k+1}-a)\right)^\frac{\kappa}{2(l-1)}\\
\geq&
C_{\beta_k}(t_{k}-a)^{\beta_k\frac{\kappa}{2(l-1)}}(t_{k+1}-a)^{(1-\beta_k)\frac{\kappa}{2(l-1)}},
\end{align*}
by the arithmetic-geometric mean inequality. Here the constant $C_{\beta_k}$ is
\[
C_{\beta_k}=\min\left(\frac{3\beta_k}{2},3(1-\beta_k)\right)^\frac{\kappa}{2(l-1)}.
\]Similarly, we can write
\begin{equation}\label{tk-estimate}
\begin{aligned}
    (t''_{k}-a)^\frac{\kappa}{2(l-1)}=&\left(\frac{2(t_k-a)+(t_{k+1}-a)}{3}\right)^\frac{\kappa}{2(l-1)}\\
\geq&
C_{\beta_k}\left(\beta_k (t_{k}-a)+(1-\beta_k)(t_{k+1}-a)\right)^\frac{\kappa}{2(l-1)}\\
\geq&
C_{\beta_k}(t_{k}-a)^{\beta_k\frac{\kappa}{2(l-1)}}(t_{k+1}-a)^{(1-\beta_k)\frac{\kappa}{2(l-1)}}.
\end{aligned}
\end{equation}
Using \eqref{tk-estimate} to estimate the right hand side of \eqref{LambdaIntegral3} one obtains, for any values $0< \beta_i<1$,
\begin{align*}
    &\prod_{k=1}^{l-1}|t_{k+1}'-a|^\frac{\kappa}{2(l-1)}|t_{k}''-a|^{\frac{\kappa}{2(l-1)}}\\
    &\geq C |t_1-a|^{\frac{\kappa}{2(l-1)}2\beta_1}|t_l-a|^{\frac{\kappa}{2(l-1)}2(1-\beta_{l-1})}\prod_{i=2}^{l-1}|t_i-a|^{\frac{\kappa}{2(l-1)}2(1+\beta_i-\beta_{i-1})}.
\end{align*}
The proof would follow if we can choose each exponent equal to $\kappa/l$. This condition gives rise to a linear system of equations,
\begin{align*}
\beta_1=\frac{l-1}{l},\quad \beta_i=\frac{l-1}{l}-1+\beta_{i-1}=\beta_{i-1}-\frac{1}{l},\quad \beta_{l-1}=1-\frac{l-1}{l}=\frac{1}{l}.
\end{align*}
By recursion, the solution is given by 
\begin{equation}\label{BetaChoice}
    \beta_{i}=\frac{l-i}{l}\in (0,1),
\end{equation}
for each $1\leq i \leq l-1$.
Thus, choosing $\beta_i$ as in \eqref{BetaChoice} we conclude that
\[
\prod_{k=1}^{l-1}|t_{k+1}'-a|^\frac{\kappa}{2(l-1)}|t_{k}''-a|^{\frac{\kappa}{2(l-1)}}
\geq C\prod_{j=1}^l|t_j-a|^\frac{\kappa}{l},
\]
which inserted into \eqref{LambdaIntegral3} gives
\begin{equation*}
    \Lambda_l(t_1,\ldots,t_l)
    \geq
    \frac{C}{3^{l-1}}\prod_{1\leq i<j\leq l-1}|t_{j}-t_i|\prod_{j=k}^l|t_k-a|^\frac{\kappa}{l}\geq C\prod_{j=k}^l|\varphi^{(d)}(t_k)|^\frac{1}{l}\prod_{1\leq i<j\leq l-1}|t_{j}-t_i|.
\end{equation*}
This concludes the proof.
\end{proof}

\begin{remark}
Once the assumptions of Proposition \ref{GeoIneq} are met, the geometric inequality follows directly from the polynomial geometric inequality given in \cite{DW}. This has to do with the recursive formula. Let $\upsilon(t)=(t,\ldots,t^{d-1},p(t))$ denote the polynomial simple curve with $p^{(d)}(t)=(t-a)^\kappa$.
Moreover, let $\Lambda_j$ be as in Proposition \ref{GeoIneq}, and let $\Sigma_j$ denote the recursive integrals associated to the curve $\upsilon$. From the equivalence of $\varphi^{(d)}$ and $p^{(d)}$, it follows that
\begin{align*}
|J_\Gamma(t_1,\ldots,t_d)|=|\Lambda_d(t_1,\ldots,t_d)|\geq C|\Sigma_d(t_1,\ldots,t_d)|=C|J_{\Upsilon}(t_1,\ldots,t_d)|,
\end{align*}
where $\Upsilon(t)=\sum_{j=1}^d\upsilon(t_i)$. However, since there already exists a geometric inequality for polynomial curves, we know that
\[
|J_\Gamma(t_1,\ldots,t_d)|\geq C|J_{\Upsilon}(t_1,\ldots,t_d)|\geq C\prod_{i=1}^d|t_i-a|^\frac{\kappa}{d}\prod_{k>l}|t_i-t_k|
\geq C\prod_{i=1}^d|\varphi^{(d)}(t_i)|^\frac{1}{d}\prod_{k>l}|t_i-t_k|,
\]
which then proves the geometric inequality.
\end{remark}
\section{Fourier Restriction}\label{Reduction}
In this section we prove Theorem \ref{MainRestrictionTheorem} using the techniques of Stovall from Section 4 in \cite{Stovall}. As is customary in the theory of Fourier restriction, we will consider the dual formulation. For a curve $\g:I\to\mathbb{R}^d$ we define the weighed and unweighted extension operators $\E_\g$ and $\mathcal{F}_\g$ to be defined as
\[
\E_\g f(x)=\int_I f(t)e^{2\pi i x\cdot \g(t)}\lambda_\g dt,\quad \F_\g f(x)=\int_I f(t)e^{2\pi i x\cdot \g(t)}dt.
\]
Theorem \ref{MainRestrictionTheorem} is equivalent to showing that the operator norm of the weighed extension operator is bounded, 
\[
\|\E_\g f\|_{L^{p'}(\R^d)}\leq C\|f\|_{L^{q'}(I;\lambda_\g dt)},
\]
where $\lambda_\g=|L_\g(t)|^\frac{2}{d^2+d}$, and where $C$ is uniform over the class $\mathcal{A}_0^{d-1}(I,N,R)$. For the proof of Theorem \ref{MainRestrictionTheorem}, we need the following uniform restriction lemma for the unweighted extension operator.
\begin{lemma}[Lemma $1$ in \cite{Chen}]\label{Restriction-Chen}
Let $d\geq 3$ and let $\gamma$ be a simple curve with $\varphi\in C^d(I)$ which satisfies
\[
\frac{1}{2}\leq |\varphi^{(d)}(t)|\leq 1,
\]
for all $t\in I$. Then
\[
\|\F_\g f\|_{L^{p'}(\R^d)}\leq C\|f\|_{L^{q'}(I)},
\]
holds for all $p$ and $q$ in the range
\[
1\leq p< \frac{d^2+d+2}{d^2+d},\quad q=\frac{2}{d^2+d}p',
\]
and the constant $C$ depends only on $p$ and $d$.
\end{lemma}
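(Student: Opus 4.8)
The plan is to reduce the statement, via duality, to a weighted extension estimate, to observe that the hypothesis $\tfrac12\le|\varphi^{(d)}|\le1$ makes the curve uniformly nondegenerate, and then to verify the two structural properties underlying the Dendrinos--Wright and Stovall reduction schemes, so that those schemes apply essentially verbatim with constants depending only on $p$ and $d$. First I would pass to the dual formulation. For a simple curve the torsion is $L_\g=C_d\varphi^{(d)}$, so under the hypothesis the affine arclength weight $\lambda_\g=|L_\g|^{2/(d^2+d)}$ satisfies $\lambda_\g\sim_d1$ on $I$; hence $\F_\g$ and $\E_\g$ are comparable and it suffices to prove $\|\E_\g f\|_{L^{p'}(\R^d)}\le C(p,d)\|f\|_{L^{q'}(I;\lambda_\g\,dt)}$. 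Since $\varphi^{(d)}$ is continuous and non-vanishing it is single-signed on $I$, so no further sign decomposition is required.

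Next I would verify the geometric inequality by applying Proposition \ref{GeoIneq} with $I_0=I$, exponent $\kappa=0$, centre $a$ any point of $\R\setminus I$, and $C_1=\tfrac12$, $C_2=1$. This yields, for every $(t_1,\dots,t_d)\in I^d$,
\[
|J_\Gamma(t_1,\dots,t_d)|\ \ge\ C_d\prod_{j=1}^d|\varphi^{(d)}(t_j)|^{1/d}\prod_{l<k}|t_k-t_l|\ \gtrsim_d\ \prod_{l<k}|t_k-t_l|,
\]
which is \eqref{GeometricInequality}. Together with the $\kappa=0$ case of \eqref{PolyDecompEstimate} (the bound $\tfrac12\le|\varphi^{(d)}|\le1$ itself), this forces $\Gamma(t_1,\dots,t_d)=\sum_i\g(t_i)$ to be injective on each sector $\{t_{\sigma(1)}<\dots<t_{\sigma(d)}\}$, i.e.\ $d!$-to-one off the diagonal, exactly as recorded after Theorem \ref{MainDecompTheorem}. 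All the constants here depend only on $d$, which is what makes the eventual estimate uniform over the class in the statement.

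With these two properties in place the restriction estimate in the Christ range \eqref{ChristRange} follows directly from the reduction of Section 3 of \cite{DW}: expanding $(\E_\g f)^d=\widehat{\mu}$ with $\mu=\Gamma_*\big(\prod_{i=1}^d f(t_i)\lambda_\g(t_i)\,dt_i\big)$, the change of variables $y=\Gamma(t)$ — legitimate by the $d!$-to-one property — combined with the lower Jacobian bound realizes $\mu$ as an absolutely continuous measure whose density is controlled in the appropriate mixed-norm space, and Hausdorff--Young then closes the estimate. To reach the full range $q=\tfrac{2}{d^2+d}p'$, $p<\tfrac{d^2+d+2}{d^2+d}$, I would re-run Stovall's argument from Sections 3 and 4 of \cite{Stovall}: by real interpolation it is enough to establish a restricted-type bound $\|\E_\g\chi_E\|_{L^{p'}}\lesssim|E|^{1/q'}$ for $p$ in the open range, and this is obtained by combining the Christ-range estimate applied at all dyadic scales with a $d$-linear estimate and a two-ends argument, all of which use only the geometric inequality and the $d!$-to-one structure rather than any polynomiality of $\g$. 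I expect this last step to be the main obstacle: it is precisely the Drury--Christ--Stovall endpoint mechanism, where one must track the simultaneous interaction of the $d$ inputs across all scales and extract the sharp line, whereas the reductions preceding it are routine once Proposition \ref{GeoIneq} is available.
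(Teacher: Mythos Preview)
The paper does not prove this lemma at all: it is quoted verbatim as Lemma~1 of \cite{Chen} and used as a black box (via Corollary~\ref{Restriction-Chen-Cor}) inside Lemma~\ref{MultiLem} and hence in the proof of Theorem~\ref{MainRestrictionTheorem}. So there is no ``paper's own proof'' to compare against; your proposal is an independent re-derivation.

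Your outline is essentially the route Chen--Fan--Wang themselves take, and it is sound in spirit: under $\tfrac12\le|\varphi^{(d)}|\le1$ the weight $\lambda_\g\sim_d1$, Proposition~\ref{GeoIneq} applies with $\kappa=0$ to give the geometric inequality on all of $I$ (note the statement of Proposition~\ref{GeoIneq} has $\kappa\in\mathbb{Z}_+$, but the proof and Lemma~\ref{SingleIntegralInequality} go through unchanged for $\kappa=0$), and the $d!$-to-$1$ property follows. The Dendrinos--Wright reduction then yields the Christ range with constants depending only on $d$.

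The one place you must be careful is the passage to the full range. You cannot invoke this paper's Section~\ref{Reduction} as written, because its Lemma~\ref{MultiLem} uses Corollary~\ref{Restriction-Chen-Cor} --- i.e.\ the very lemma you are trying to prove --- to supply the ``trivial'' single-piece estimate \eqref{trivialMulti} and to interpolate against at $(p_\varepsilon,q_\varepsilon)$ near the endpoint. That would be circular. You correctly point instead to the original arguments in \cite{Stovall} (or, equivalently, the offspring/induction-on-scales mechanism of Drury and Bak--Oberlin--Seeger that \cite{Chen} uses), whose only structural inputs are the geometric inequality and the injectivity of $\Gamma$ on ordered simplices; since $\kappa=0$ here there is no genuine torsion-scale decomposition to manage, and those arguments go through uniformly in $\varphi$. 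Just be explicit in your write-up that you are \emph{not} appealing to this paper's Section~\ref{Reduction}.
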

There is an immediate corollary of Lemma \ref{Restriction-Chen} using the affine invariance of the weighted extension operator on the critical line due to the affine arc length measure.
\begin{corollary}\label{Restriction-Chen-Cor}
Let $d\geq 3$, $M>0$, and let $\gamma$ be a simple curve with $\varphi\in C^d(I)$ which satisfies
\begin{equation}\label{assumptionsResCor}
  \frac{M}{2}\leq |\varphi^{(d)}(t)|\leq M,
\end{equation}
for all $t\in I$. Then
\[
\|\E_\g f\|_{L^{p'}(\R^d)}\leq C\|f\|_{L^{q'}(I; \lambda_\g dt)},
\]
holds for all $p$ and $q$ in the range
\[
1\leq p< \frac{d^2+d+2}{d^2+d},\quad q=\frac{2}{d^2+d}p',
\]
and the constant $C$ depends only on $p$ and $d$.
\end{corollary}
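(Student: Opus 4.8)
The plan is to deduce the weighted extension estimate for $\g$ from Lemma \ref{Restriction-Chen} in two moves: first rescale the last coordinate so that $\varphi^{(d)}$ is normalized to lie in $[\tfrac12,1]$, producing a curve to which Lemma \ref{Restriction-Chen} applies directly; then transport the resulting bound back to $\g$ using the affine invariance of the weighted extension operator on the critical line $q=\tfrac{2}{d^2+d}p'$.

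Concretely, I would let $A\colon\R^d\to\R^d$ be the linear map $A(x_1,\dots,x_d)=(x_1,\dots,x_{d-1},M^{-1}x_d)$ and put $\tilde\g=A\circ\g$, so that $\tilde\g(t)=(t,\dots,t^{d-1},\tilde\varphi(t))$ with $\tilde\varphi=M^{-1}\varphi\in C^d(I)$; by \eqref{assumptionsResCor} we get $\tfrac12\le|\tilde\varphi^{(d)}(t)|\le 1$ on $I$. Since $L_{\tilde\g}=C_d\tilde\varphi^{(d)}$, the affine arc length density $\lambda_{\tilde\g}=|L_{\tilde\g}|^{2/(d^2+d)}$ is bounded above and below by constants depending only on $d$, so writing $\E_{\tilde\g}f=\F_{\tilde\g}(\lambda_{\tilde\g}f)$ and absorbing this harmless factor, Lemma \ref{Restriction-Chen} yields $\|\E_{\tilde\g}f\|_{L^{p'}(\R^d)}\le C(p,d)\,\|f\|_{L^{q'}(I;\lambda_{\tilde\g}dt)}$. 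For the transfer step I would use $\tilde\g^{(j)}=A\g^{(j)}$ to obtain $L_{\tilde\g}=(\det A)L_\g$ and hence $\lambda_{\tilde\g}=|\det A|^{2/(d^2+d)}\lambda_\g$, together with the identity $x\cdot\tilde\g(t)=(A^Tx)\cdot\g(t)$ and the substitution $y=A^Tx$ to get $\E_{\tilde\g}f(x)=|\det A|^{2/(d^2+d)}\,\E_\g f(A^Tx)$. Feeding these into the Step 1 estimate, all occurrences of $|\det A|=M^{-1}$ (one from the measure against which the $L^{q'}$ norm is taken, one from the Jacobian of the substitution in the $L^{p'}$ norm, one from rewriting $\E_{\tilde\g}$ in terms of $\E_\g$) collect into a single power $M^{\theta}$ with $\theta=\tfrac{2}{d^2+d}-\tfrac1{p'}-\tfrac{2}{q'(d^2+d)}$.

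The last thing to check is that $\theta=0$: multiplying by $d^2+d$ and using $\tfrac{d^2+d}{p'}=\tfrac2q$ (which is precisely the critical relation $q=\tfrac{2}{d^2+d}p'$) gives $(d^2+d)\theta=2-\tfrac2q-\tfrac2{q'}=2\bigl(1-\tfrac1q-\tfrac1{q'}\bigr)=0$, so the $M$-dependence disappears and $\|\E_\g f\|_{L^{p'}(\R^d)}\le C(p,d)\,\|f\|_{L^{q'}(I;\lambda_\g dt)}$, as claimed. There is no real obstacle here; the argument is bookkeeping, and the only delicate point is exactly this exponent count, since the \emph{unweighted} estimate is not affine invariant — the normalization constant $M$ would survive — and it is only the joint scaling of $\E_\g$, of Lebesgue measure $dx$ on $\R^d$, and of the affine arc length measure $\lambda_\g\,dt$ on $I$ that conspires to cancel, and only on the critical line. (The hypothesis $d\ge 3$ enters only through Lemma \ref{Restriction-Chen}.)
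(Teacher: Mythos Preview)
Your proof is correct and follows essentially the same approach as the paper: both scale the last coordinate by $M^{-1}$ to reduce to Lemma~\ref{Restriction-Chen} and then track the resulting powers of $M$, which cancel precisely on the critical line $q=\tfrac{2}{d^2+d}p'$. The only cosmetic difference is that you package the transfer step as affine invariance of the weighted extension operator under the linear map $A$, whereas the paper carries out the same bookkeeping by first relating $\F_\g$ and $\F_{\g_M}$ via change of variables and then bounding $\E_\g$ pointwise by $M^{2/(d^2+d)}\F_\g$.
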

\begin{proof}
Consider the scaled simple curve
\[
\g_M(t)=(t,t^2,\ldots, t^{d-1}, M^{-1}\varphi(t)).
\]
Then the assumptions of Lemma \ref{Restriction-Chen} are satisfied for $\g_M$. Moreover, through a change of variables argument we have
\[
M^{\frac{1}{p'}}\|\F_\g f\|_{L^{p'}(\R^d)}=\|\F_{\g_M}f\|_{L^{p'}(\R^d)}\leq C\|f\|_{L^{q'}(I)}.
\]

From \eqref{assumptionsResCor} we have
\[
\left(\frac{M}{2}\right)^\frac{2}{d^2+d}\leq \lambda_\g(t)\leq M^{\frac{2}{d^2+d}},
\]
as $\lambda_\g(t)=|\varphi^{(d)}(t)|^{\frac{2}{d^2+d}}$. In particular, we can bound the weighed extension operator point-wise by the unweighted, since
\[
\E_\g f(x)=\int_I e^{2\pi i x\cdot \g(t)}f(t)\lambda(t)dt\leq M^{\frac{2}{d^2+d}}\int_I e^{2\pi i x\cdot \g(t)}f(t)dt=M^{\frac{2}{d^2+d}}\F_\g f(x).
\]
Therefore, using the fact that $q/p'=2/(d^2+d)$, we have
\[
\|\E_\g f\|_{L^{p'}(\R^d)}\leq M^{\frac{q}{p'}}\|\F_\g f\|_{L^{p'}(\R^d)}=M^{\frac{q-1}{p'}}\|\F_{\g_M}f\|_{L^{p'}(\R^d)}\leq C M^{\frac{q}{q'p'}}\|f\|_{L^{q'}(I)}\leq C\|f\|_{L^{q'}(I;\lambda_\g dt)},
\]
where we used that $q-1=q/q'$ and $(M/2)^{q/p'}\leq \lambda_\g(t)$.
\end{proof}

The following proposition is proven in Section $3$ in \cite{Stovall}, and follows from the theory of Littlewood-Paley square-function estimates, see for instance chapter $7$ of \cite{Muscalu}.
\begin{prop}[Proposition $3.2$ in \cite{Stovall}]\label{SqFuncEst}
Let $\varphi\in \mathcal{A}_0^{d-1}(I,N,R)$ and let $I_j$ be the intervals from Theorem \ref{MainDecompTheorem}. For $n\in\mathbb{Z}$, define
\begin{equation}\label{SubCollectionInt}
    I_{j,n}:=\{t\in I_j: 2^{n-1}<|t-a_j|\leq 2^n\}.
\end{equation}
Then for each $(p,q)$ satisfying $(d^2+d)q=2p'$ and $p'>(d^2+d+2)/2$, and for any $f\in L^p$ and any fixed $j$,
\[
\|\mathcal{E}_\g(\chi_{I_j}f)\|_{L^{p'}(\R^d)}\leq C\left\|\left(\sum_{n\in\mathbb{Z}}|\mathcal{E}_\g(\chi_{I_{j,n}}f)|^2\right)^\frac{1}{2}\right\|_{L^{p'}(\R^d)},
\]
where the constant $C$ depends only on $N$, $d$ and $p$.
\end{prop}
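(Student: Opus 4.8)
The plan is to recognise this as an instance of the one-dimensional Littlewood--Paley square function estimate, applied in the first frequency variable. Write $g:=\mathcal{E}_\g(\chi_{I_j}f)$ and $g_n:=\mathcal{E}_\g(\chi_{I_{j,n}}f)$, so that $g=\sum_n g_n$ (the identity $\chi_{I_j}f=\sum_n\chi_{I_{j,n}}f$ holds a.e.\ since $a_j\notin I_j$). The decisive point is that the first component of a simple curve is $\g_1(t)=t$; hence $\widehat{g_n}$, a measure carried by the arc $\g(I_{j,n})$, is supported in the slab
\[
\{\xi\in\R^d:\ \xi_1\in I_{j,n}\}\subseteq\{\xi\in\R^d:\ 2^{n-1}<|\xi_1-a_j|\le 2^n\}.
\]
Thus the supports of the $\widehat{g_n}$ lie, with bounded overlap, in the dyadic annuli in the single variable $\xi_1$ centred at $a_j$. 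Conjugating by the modulation $x\mapsto e^{-2\pi i a_j x_1}$, which is an isometry of every $L^{p'}(\R^d)$ and translates Fourier supports by $-a_j e_1$, we may assume $a_j=0$, so these become the standard dyadic annuli $\{2^{n-1}<|\xi_1|\le 2^n\}$.

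Next I would fix a smooth Littlewood--Paley family in $\xi_1$: choose $\psi\in C_c^\infty(\R)$ with $\psi\equiv 1$ on $\{\tfrac12\le|s|\le 1\}$ and $\operatorname{supp}\psi\subset\{\tfrac14<|s|<2\}$, and let $\tilde P_n$ be the Fourier multiplier operator on $\R^d$ with symbol $(\xi_1,\dots,\xi_d)\mapsto\psi(2^{-n}\xi_1)$. Since $\psi(2^{-n}\xi_1)=1$ on $\{2^{n-1}<|\xi_1|\le 2^n\}\supseteq\operatorname{supp}\widehat{g_n}$ we have $\tilde P_n g_n=g_n$, while $\operatorname{supp}\psi(2^{-n}\cdot)$ meets the annulus of index $m$ only for $|m-n|\le 2$, so $\tilde P_n g=\sum_{|m-n|\le 2}\tilde P_n g_m$.

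The estimate then follows from two classical facts of Littlewood--Paley theory, valid for $1<p'<\infty$ (which is the case here, as the hypotheses force $p'>\tfrac{d^2+d+2}{2}$ and, in the range that must be treated, $p'\le d^2+d$): (i) the reverse square function inequality $\|h\|_{L^{p'}(\R^d)}\lesssim\big\|(\sum_n|\tilde P_n h|^2)^{1/2}\big\|_{L^{p'}(\R^d)}$, which holds because $\sum_n\psi(2^{-n}\xi_1)$ is bounded above and below away from $\xi_1=0$; and (ii) the $\ell^2$-valued boundedness $\big\|(\sum_n|\tilde P_n h_n|^2)^{1/2}\big\|_{L^{p'}}\lesssim\big\|(\sum_n|h_n|^2)^{1/2}\big\|_{L^{p'}}$ of the diagonal action of $\{\tilde P_n\}$ on $\ell^2$-valued functions (the vector-valued Mikhlin--H\"ormander theorem; see Chapter~7 of \cite{Muscalu}). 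Applying (i) to $g$, then the identity $\tilde P_n g=\sum_{|m-n|\le 2}\tilde P_n g_m$ together with the triangle inequality in $\ell^2_n$, and then (ii) to each shifted family $\{g_{n+\ell}\}_n$ with $|\ell|\le 2$, yields
\[
\|g\|_{L^{p'}}\lesssim\Big\|\Big(\sum_n|\tilde P_n g|^2\Big)^{1/2}\Big\|_{L^{p'}}\lesssim\sum_{|\ell|\le 2}\Big\|\Big(\sum_n|\tilde P_n g_{n+\ell}|^2\Big)^{1/2}\Big\|_{L^{p'}}\lesssim\Big\|\Big(\sum_m|g_m|^2\Big)^{1/2}\Big\|_{L^{p'}},
\]
which is the claimed inequality, with a constant depending only on $p$ and $d$.

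I do not anticipate a genuine obstacle: the entire content is the observation that the first coordinate of a simple curve is the identity map, which turns the family $\{I_{j,n}\}$ into a bona fide dyadic Littlewood--Paley partition in $\xi_1$; thereafter the statement is a black-box consequence of classical theory. The only points needing a little care are the passage from sharp to smooth frequency cut-offs (supplied by the bounded overlap of the dyadic annuli) and the harmless modulation used to recentre the annuli at the origin; in particular no Rubio de Francia type theorem is required, since the pieces are lacunary rather than arbitrary.
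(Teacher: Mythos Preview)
Your proposal is correct and matches the paper's treatment: the paper does not give its own proof of this proposition but simply cites Stovall and remarks that it follows from Littlewood--Paley square-function theory (Chapter~7 of \cite{Muscalu}), and your argument supplies exactly those details, exploiting that $\g_1(t)=t$ so that the pieces $\widehat{g_n}$ have dyadically separated $\xi_1$-supports. The only cosmetic difference is that your constant depends just on $p$ and $d$, whereas the stated dependence on $N$ is harmless overkill inherited from the surrounding context.
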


The remaining part of this section is more or less due to Stovall as we will closely follow the ideas of Section $4$ in \cite{Stovall}. For completeness, we have decided to include the full proofs as some
minor changes were needed.
The next lemma is a key component in the proof of Theorem \ref{MainRestrictionTheorem}, and is a modified version of Lemma $4.1$ in \cite{Stovall}.
\begin{lemma}\label{MultiLem}
Let $I$ be an interval. Assume that there exists $C_1,C_2>0$, $\kappa\in\mathbb{N}$ and $a\in\mathbb{R}$ such that $C_1|t-a|^{\kappa}\leq|L_\g(t)|\leq C_2|t-a|^{\kappa}$ for each $t\in I$. Let $n_1\leq\ldots \leq n_D$ be a finite sequence of integers, and assume that for each $1\leq j\leq D$ the function $f_j\in L^{q}(I;\lambda_\g dt)$ is supported on $I_{n_j}:=\{t\in I: 2^{n_j-1}<|t-a|\leq 2^{n_j}\}$. Then there exist constants $K=K_{d}>0$ and $C=C_{d,N,p,q}>0$ such that
\[
\left\|\prod_{l=1}^D\mathcal{E}_\g f_l\right\|_{L^{\frac{p'}{D}}(\R^d)}\leq C2^{-K(n_D-n_1)}\prod_{l=1}^D\|f_l\|_{L^{q'}(I;\lambda_\g dt)}.
\]
\end{lemma}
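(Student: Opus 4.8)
The plan is to reduce the multilinear estimate to a single bilinear (or better, a single rescaled monolinear) estimate by exploiting the dyadic separation of the supports. First I would dispose of the trivial case where all $n_j$ are comparable, say $n_D - n_1 \le C_0$ for a fixed constant: then the factor $2^{-K(n_D-n_1)}$ is harmless, and the estimate follows by H\"older's inequality in $L^{p'}(\mathbb{R}^d)$ (splitting $p'/D$ as a sum of $D$ copies of $p'$... more precisely, using $\|\prod g_l\|_{p'/D} \le \prod \|g_l\|_{p'}$) together with Corollary \ref{Restriction-Chen-Cor} applied on each annulus $I_{n_j}$, where $|L_\g|$ is comparable to the constant $M_j \sim 2^{\kappa n_j}$. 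Here one checks that $\lambda_\g$ restricted to $I_{n_j}$ is comparable to $2^{\frac{2\kappa n_j}{d^2+d}}$, so the hypothesis \eqref{assumptionsResCor} of the corollary holds on that annulus with $M = M_j$.

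The heart of the matter is the gain $2^{-K(n_D - n_1)}$ when the scales are widely separated. The mechanism is the same as in Stovall's Lemma 4.1: on each annulus $I_{n_j}$, after rescaling $t \mapsto 2^{n_j} s$ the curve becomes (a perturbation of) a fixed model simple curve on a fixed interval, and the extension operator at scale $2^{n_j}$ concentrates at frequency $\sim 2^{n_j}$ in the relevant coordinate directions. When $n_D \gg n_1$, the outputs $\mathcal{E}_\g f_{n_1}$ and $\mathcal{E}_\g f_{n_D}$ live at genuinely different frequency scales in the first coordinate (which is just $t$ itself), so the product is supported where the frequency variables are forced into a thin set; quantitatively this produces exponential decay in $n_D - n_1$. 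Concretely, I would interpolate: for $D=1$ there is no gain needed, for $D=2$ one proves a bilinear estimate $\|\mathcal{E}_\g f_{n_1} \mathcal{E}_\g f_{n_2}\|_{L^{p'/2}} \lesssim 2^{-K(n_2-n_1)} \|f_{n_1}\|_{q'}\|f_{n_2}\|_{q'}$ by a Fourier-support/almost-orthogonality argument (the convolution $\widehat{\mathcal{E}_\g f_{n_1}} * \widehat{\mathcal{E}_\g f_{n_2}}$ is a measure on $\Gamma(I_{n_1}) + \Gamma(I_{n_2})$ which, by the geometric inequality \eqref{GeometricInequality} and the injectivity statement, has a Jacobian lower bound forcing integrability with a gain), and then for general $D$ one writes $\prod_{l=1}^D \mathcal{E}_\g f_{n_l}$, pairs off the extreme scales $n_1$ and $n_D$ using the $D=2$ gain via H\"older ($\|\prod_l g_l\|_{p'/D} \le \|g_1 g_D\|_{p'/2}^{?}\cdots$; more cleanly, $\|\prod_{l} g_l\|_{p'/D} \le \|g_1 g_D\|_{p'/2} \prod_{l=2}^{D-1}\|g_l\|_{p'}$ is false dimensionally, so instead use $\|\prod g_l\|_{p'/D}^D \le \|g_1g_D\|_{p'/2}^{p'/2 \cdot \text{stuff}}\cdots$ — the correct move is $\|\prod_{l=1}^D g_l\|_{p'/D} \le \|g_1 g_D\|_{p'/2}\,\|g_2\cdots g_{D-1}\|_{p'/(D-2)}$ by H\"older with exponents $2/D$ and $(D-2)/D$), bound the middle factors trivially by Corollary \ref{Restriction-Chen-Cor}, and absorb $n_1 \le n_l \le n_D$ to conclude.

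I would organize the write-up as: (1) reduce to $n_1 = 0$ by translation-in-scale, noting $\lambda_\g dt$ transforms by a fixed power of $2^{n_1}$ that is absorbed into the constant; (2) prove the single-scale bound $\|\mathcal{E}_\g f_n\|_{L^{p'}} \lesssim \|f_n\|_{L^{q'}(\lambda_\g dt)}$ uniformly in $n$ from Corollary \ref{Restriction-Chen-Cor}, which handles $D=1$ and the comparable-scales case; (3) prove the bilinear gain $\|\mathcal{E}_\g f_{n_1}\mathcal{E}_\g f_{n_2}\|_{L^{p'/2}} \lesssim 2^{-K(n_2-n_1)}\prod\|f_{n_j}\|_{L^{q'}(\lambda_\g dt)}$, the one genuinely new computation, using the geometric inequality to get a Jacobian lower bound on $\Gamma$ restricted to $I_{n_1}\times I_{n_2}$ (the separation $|t_2 - t_1| \sim 2^{n_2}$ and the torsion lower bounds give a quantitative gain when changing variables $ (t_1,t_2) \mapsto \gamma(t_1)+\gamma(t_2)$, after reducing $p'/2$ to the $L^2$-based estimate by the $d$-linear version of the argument or by Plancherel in the $d=2$-subspace); (4) combine via H\"older as above.

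The main obstacle I anticipate is step (3): getting the honest exponential gain $2^{-K(n_D-n_1)}$ rather than just a uniform bound. The uniform bound per annulus is immediate from Corollary \ref{Restriction-Chen-Cor}, but the decay requires quantifying how the separation of dyadic scales improves the restriction estimate. Stovall handles this by a more elaborate induction-on-scales / broad-narrow decomposition; here the situation is simpler because the geometric inequality \eqref{GeometricInequality} from Theorem \ref{MainDecompTheorem} is already available on all of $I_j$ with uniform constants, so the change-of-variables $(t_1,\dots,t_d) \mapsto \sum \gamma(t_i)$ is a global diffeomorphism onto its image with a controlled Jacobian; the gain then comes from the fact that $\prod_{k>l}|t_k - t_l| \gtrsim 2^{n_D}\cdot(\text{lower scales})$ is large when scales separate, which shrinks the measure of the image set and hence improves the $L^{p'/d} \to L^{q'}$ bound after interpolation with the $d$-linear $L^2$-adjoint estimate. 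I would make this precise by the standard argument: $\|\prod_{l=1}^d \mathcal{E}_\g f_l\|_{L^2}^2 = \int (\text{pushforward of } \prod f_l dt) * (\text{conjugate})$, bound by $\|J_\Gamma^{-1}\|_{L^\infty}$ times $\prod\|f_l\|_{L^2(\lambda_\g)}^2$ with the Jacobian lower bound supplying the decay, then interpolate this $L^2$-bound (which has the gain) with the crude $L^\infty$-bound (which does not) to reach the desired $L^{p'/D}$ exponent, losing only a fraction of the gain — still leaving a positive power $2^{-K(n_D-n_1)}$.
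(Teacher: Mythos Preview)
Your outline has the right coarse structure --- dispose of comparable scales via H\"older and Corollary~\ref{Restriction-Chen-Cor}, extract exponential gain at an endpoint, then interpolate --- but the key step has two real gaps.

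First, the reduction to a \emph{bilinear} estimate cannot work for $d\ge 3$. The convolution/change-of-variables mechanism that converts the geometric inequality into an analytic gain needs exactly $d$ factors: $\Gamma(t_1,\dots,t_d)=\sum_j\g(t_j)$ maps $I^d\to\R^d$ and has a Jacobian, whereas with only two factors $\g(t_1)+\g(t_2)$ sweeps out a curved $2$-surface in $\R^d$, so there is no square Jacobian and no ``Plancherel in the $d=2$-subspace''. The paper's H\"older step is instead
\[
\left\|\prod_{l=1}^D\E_\g f_l\right\|_{L^{p'/D}}\le \left\|\prod_{j=1}^d\E_\g f_{\sigma(j)}\right\|_{L^{p'/d}}\prod_{l=d+1}^D\left\|\E_\g f_{\sigma(l)}\right\|_{L^{p'}},
\]
reducing to a $d$-linear estimate, with $\sigma$ chosen so that the extreme scales $n_1,n_D$ appear among the $d$ selected indices.

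Second, even in the $d$-linear setup, your proposed endpoint --- an $L^2$ bound for $\prod_{l=1}^d\E_\g f_l$ controlled by $\|J_\Gamma^{-1}\|_{L^\infty}$ --- fails outright. The hypothesis allows $n_1\le\cdots\le n_d$ with repetitions (and even adjacent annuli touch), so two of the $t_j$ can coincide; $J_\Gamma$ vanishes on the diagonal and $J_\Gamma^{-1}\notin L^\infty$. The paper works instead at the endpoint $q=2$, $p'=d^2+d$, so that $p'/d=d+1$ and Hausdorff--Young gives $\|\prod_{j=1}^d\E_\g f_j\|_{L^{d+1}}\le\|\mu_1*\cdots*\mu_d\|_{L^{(d+1)/d}}$. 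After the change of variables only $|J_\Gamma|^{-1/d}$ remains in the integrand, and the geometric inequality converts this to $\prod_{i<j}|t_i-t_j|^{-1/d}$, which lies merely in weak $L^d$ in each variable. This is handled by Christ's multilinear interpolation (Proposition~2.2 of \cite{Christ}), after first peeling off the cross-scale factors $|t_j-t_i|$ with $i\le k<j$ --- where $k$ is the gap index with $n_{k+1}-n_k\ge(n_d-n_1)/d$ --- to harvest the exponential decay. That weak-$L^d$ step, not an $L^\infty$ bound on the inverse Jacobian, is what makes the argument close; your plan as written would diverge at the diagonal.
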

\begin{proof}
By the generalized H\"{o}lder's inequality it follows for any $\sigma\in S_D$, where $S_D$ denotes the group of permutations on $\{1,\ldots,D\}$, and any integer $1\leq M\leq D-1$, that
\[
\left\|\prod_{l=1}^D\mathcal{E}_\g f_l\right\|_{L^{\frac{p'}{D}}(\R^d)}\leq \prod_{l=N+1}^D\left\|\mathcal{E}_\g f_{\sigma(l)}\right\|_{L^{p'}(\R^d)}\left\|\prod_{j=1}^M\mathcal{E}_\g f_{\sigma(j)}\right\|_{L^{\frac{p'}{M}}(\R^d)}.
\]
Thus, it is enough to show that
\begin{equation}\label{Reduced multi-est}
\left\|\prod_{j=1}^d\mathcal{E}_\g f_{\sigma(j)}\right\|_{L^{\frac{p'}{D}}(\R^d)}\leq C2^{-K\left(n_{\sigma(d)}-n_{\sigma(1)}\right)}\prod_{l=1}^d\|f_l\|_{L^{q'}(I;\lambda_\g dt)},
\end{equation}
holds for any choice of permutation $\sigma\in S_D$ by choosing $M=d$, and for the remaining terms using that
\begin{equation}\label{trivialMulti}
  \left\|\prod_{l=1}^D\mathcal{E}_\g f_l\right\|_{L^{\frac{p'}{D}}(\R^d)}\leq \prod_{l=1}^D\left\|\mathcal{E}_\g f_{\sigma(l)}\right\|_{L^{p'}(\R^d)}\leq C\prod_{l=1}^D\|f_l\|_{L^{q'}(I; \lambda_\g dt)},
\end{equation}
by corollary \ref{Restriction-Chen-Cor} as each $f_j$ is supported on $I_{n_j}$. 

Assume \eqref{Reduced multi-est} holds for the case $q_0=2$, which is also the case $p_0'=d^2+d$ as we are on the scaling line. Then, for any $p'=Dq$, and $\theta\in(0,1)$,
\begin{equation*}
    \frac{1}{q_\theta}=\frac{\theta}{q}+\frac{1-\theta}{2}=D\left(\frac{\theta}{p'}+\frac{1-\theta}{d^2+d}\right)=D\frac{1}{p_\theta'},
\end{equation*}
where $p_\theta$ is given by
\[
\frac{1}{p_\theta}=\frac{\theta}{p}+\frac{1-\theta}{p_0}.
\]
This means that interpolation between points on the scaling line $p'=Dq$ keeps us on the scaling line.
So for any $(p,q)$ satisfying
\[
p<\frac{d^2+d+2}{d^2+d},\quad p'=Dq,
\]
we can find $\varepsilon=\varepsilon(p)>0$ such that $p_\varepsilon=p+\varepsilon<(d^2+d+2)/(d^2+d)$, and $q_\varepsilon=p_\varepsilon'/D$. Applying \eqref{trivialMulti} with $D$ replaced by $d$ for the pair $(p_\varepsilon,q_\varepsilon)$, it follows by Riesz-Thorin's interpolation theorem that
\begin{equation*}
    \left\|\prod_{j=1}^d\mathcal{E}_\g f_{\sigma(j)}\right\|_{L^{\frac{p'}{D}}(\R^d)}\leq C2^{-K\left(n_{\sigma(d)}-n_{\sigma(1)}\right)}\prod_{l=1}^d\|f_l\|_{L^{q'}(I;\lambda_\g dt)}.
\end{equation*}
It is therefore enough to consider the case $q=2$ and $p'=d^2+d$.

Without loss of generality we may assume that $\sigma(j)=j$ in \eqref{Reduced multi-est} for all $1\leq j\leq d$. Two cases will be considered separately. The first case is $n_d-n_1\leq d$. Then, by \eqref{trivialMulti} it follows that
\begin{equation}\label{FirstCase}
    \left\|\prod_{l=1}^D\mathcal{E}_\g f_l\right\|_{L^{\frac{p'}{D}}(\R^d)}\leq  C\prod_{l=1}^D\|f_l\|_{L^{q'}(I; \lambda_\g dt)}\leq C_12^d2^{-(n_d-n_1)}\prod_{l=1}^D\|f_l\|_{L^{q'}(I;\lambda_\g dt)}.
\end{equation}
which is \eqref{Reduced multi-est}.

Assume $n_d-n_1>d$, and define for each $j$ the measure $\mu_j$ ,which acts as a linear functional on $C_0(\R^d)$ through
\[\mu_j(\phi)=\int_{\R^d}\phi d\mu_j=\int_{I_{n_j}}\phi(\g(t))f_j(t)\lambda_\g(t)dt,\]
for each $\phi\in C_0(\R^d)$, and
where $\lambda_\g(t)=|L_\g(t)|^{\frac{2}{d(d+1)}}$.
The measure $\mu_j$ is defined such that $\E_\g f_j=\widehat{\mu}_j$. In particular, Hausdorff-Young's inequality gives
\begin{align}\label{Hausdorff-Young}\left\|\prod_{j=1}^d \mathcal{E}_\g f_{j}\right\|_{L^{d+1}(\R^d)}=\left\|\prod_{j=1}^d \widehat{\mu}_{j}\right\|_{L^{d+1}(\R^d)}
=&\left\|\left({\mu}_{j}*\ldots*\mu_d\right)^\wedge\right\|_{L^{d+1}(\R^d)}\nonumber\\
\leq& \|\mu_i*\ldots*\mu_d\|_{L^{\frac{d+1}{d}}(\R^d)}.\end{align}
Computing the $d$-fold convolution of the measure acting on a fixed $\phi\in C_0(\R)$ results in
\begin{align*}
\mu_1*\ldots*\mu_d(\phi)=&\int_{I_{n_1}}\ldots\int_{I_{n_d}}\phi\left(\sum_{j=1}^d \g(t_j)\right) \prod_{l=1}^d f_l(t_l)\lambda_\g(t_l)dt_l\\
=& \sum_{\sigma\in S_d}\int_{P_\sigma}\phi\left(\sum_{j=1}^d \g(t_j)\right) \prod_{l=1}^d f_l(t_l)\lambda_\g(t_l)dt,
\end{align*}
where $S_d$ is the group of permutations on $\{1,\ldots,d\}$, and where
\[P_\sigma=\left\{(t_1,\ldots,t_d)\in I_{n_1}\times\ldots\times I_{n_d}:t_{\sigma(1)}<\ldots<t_{\sigma(d)}\right\},\]
for each $\sigma\in S_d$. The map $(t_1,\ldots,t_d)\mapsto \sum_{j=1}^d\g(t_j)$ is injective on $P_\sigma$. Denote this map by $\Gamma$, namely\[
\Gamma(t)=\sum_{j=1}^d\g(t_j).
\]
Through a change of variables $y:=\Gamma(t)$ it is possible to write the Radon-Nikodym derivative of the $d$-fold convolution measure as
\begin{equation}\label{Radon-Nikodym-Derivative}\frac{d(\mu_1*\ldots*\mu_d)}{dt}=\sum_{\sigma\in S_d}F_\sigma,\end{equation}
where the functions $F_\sigma$ are given by
\begin{align*}
F_\sigma(y)=&\chi_{P_\sigma}(t)\left(\prod_{l=1}^d f_l(t_l)\lambda_\g(t_l)|J_\Gamma(t)|^{-1}\right)\Big|_{t=\left(\Gamma|_{P_\sigma}\right)^{-1}(y)}\\=&\chi_{P_\sigma}(\Gamma^{-1}(y))\left(\bigotimes_{l=1}^d f_l\lambda_\g\right)(\Gamma^{-1}(y))|J_\Gamma(\Gamma^{-1}(y))|^{-1}\\
=&\left(\chi_{P_\sigma}|J_\Gamma|^{-1}\left(\bigotimes_{l=1}^d f_l\lambda_\g\right)\right)(\Gamma^{-1}(y)).
\end{align*}
Here $f\otimes g(t_1,t_2)=f(t_1)g(t_2)$. Fix $\sigma\in S_d$. Then by reversing the change of variables $y=\Gamma(t)$, one sees that
\begin{align*}
\|F_\sigma\|^{\frac{d+1}{d}}_{L^{\frac{d+1}{d}}(\R^d)}=&\int_{\R^d}|F_\sigma(y)|^{\frac{d+1}{d}}dy\\
=&\int_{\R^d}\left|\left(\chi_{P_\sigma}|J_\Gamma|^{-1}\left(\bigotimes_{l=1}^d f_l\lambda_\g\right)\right)(\Gamma^{-1}(y))\right|^{\frac{d+1}{d}}dy\\
=&\int_{\R^d}\left|\left(\chi_{P_\sigma}\left(\bigotimes_{l=1}^d f_l\lambda_\g\right)\right)(t)\right|^{\frac{d+1}{d}}|J_\Gamma(t)|^{-\frac{1}{d}}dt\\
=&\left\|\left(\chi_{P_\sigma}\left(\bigotimes_{l=1}^d f_l\lambda_\g\right)\right)|J_\Gamma|^{-\frac{1}{d+1}}\right\|^{\frac{d+1}{d}}_{L^{\frac{d+1}{d}}(\R^d)}.
\end{align*}
By the geometric inequality \eqref{GeometricInequality}, and the fact that $\lambda_\g(t)=|\varphi^{(d)}(t)|^\frac{2}{d^2+d}$, it follows that,
\begin{align*}
\|F_\sigma\|^{\frac{d+1}{d}}_{L^{\frac{d+1}{d}}(\R^d)}
=&\int_{P_\sigma}\prod_{l=1}^d |f_l\lambda_\g(t_l)|^{\frac{d+1}{d}}|J_\Gamma(t_1,\ldots,t_d)|^{-\frac{1}{d}}dt_1\ldots dt_d\\
\leq& C\int_{P_\sigma}\prod_{l=1}^d |f_l\lambda_\g(t_l)|^{\frac{d+1}{d}}\prod_{k=1}^d\lambda_\g(t_k)^{-\frac{d+1}{2d}}\prod_{1\leq i<j\leq d}|t_j-t_i|^{-\frac{1}{d}}dt_1\ldots dt_d\\
=& C\int_{P_\sigma}\prod_{l=1}^d |f_l(t_l)|^{\frac{d+1}{d}}|\lambda_\g(t_l)|^{\frac{d+1}{2d}}\prod_{1\leq i<j\leq d}|t_j-t_i|^{-\frac1d}dt_1\ldots dt_d.
\end{align*}

Since there are only $d$ choices of $n_j$, there necessarily has to exist some $1\leq k<d$, such that $n_{k+1}-n_k\geq (n_d-n_1)/d>1$. This implies that $n_{k+1}-n_k\geq 2$ as all $n_j$ are integers. Thus, given $(t_1,\ldots,t_d)\in I_{n_1}\times\ldots\times I_{n_d}$, it follows that
\begin{align*}
\prod_{1\leq i<j\leq d}|t_j-t_i|=&\prod_{i\leq k, k+1\leq j}|t_j-t_i|\prod_{1\leq i<j\leq k}|t_j-t_i|\prod_{k+1\leq i<j\leq d}|t_j-t_i|.
\end{align*}
From the definition of $I_{n_j}$, we have that $|a-t_{j}|> 2^{n_{j}-1}$, while $|a-t_i|\leq 2^{n_i}$. Whence it follows by the reverse triangle inequality that for any $j\geq k+1$ and $i\leq k$,
\begin{equation*}
|t_j-t_i|\geq |t_{j}-a+a-t_i|\geq 2^{n_{j}-1}-2^{n_i}\geq 2^{n_j}(2^{-1}-2^{n_k-n_j})\geq2^{n_j-2},
\end{equation*}
since $n_j\geq n_{k+1}\geq n_k+2\geq n_i+2$. Thus, for each fixed $j\geq k$,
\[\prod_{i\leq k}|t_j-t_i|\geq \prod_{i\leq k}2^{n_k-2}=\frac{2^{kn_j}}{4^k}\geq\frac{2^{kn_j}}{4^d}=C_d2^{kn_j}.\]
Considering all $j\geq k+1$, yields
\[\prod_{i\leq k,k+1\leq j}|t_j-t_i|\geq C_d2^{k\sum_{j=k+1}^d n_j}.\]
Thus, returning to the estimate of the function $F_\sigma$, it can be bounded by
\begin{align}\label{FsigmaBound}
&\|F_\sigma\|^{\frac{d+1}{d}}_{L^{\frac{d+1}{d}}(\R^d)}\nonumber 
\\
&\leq C 2^{-\frac{k}{d}\sum_{j=k+1}^dn_{j}}\int_{P_\sigma}\prod_{l=1}^d |f_l\lambda^{\frac{1}{2}}_\g(t_j)|^{\frac{d+1}{d}}\prod_{1\leq i<j\leq k}|t_j-t_i|^{-\frac1d}\prod_{k+1\leq i<j\leq d}|t_j-t_i|^{-\frac1d}dt_1\ldots dt_d \notag \\
&=C 2^{-\frac{k}{d}\sum_{j=k+1}^dn_{j}}T_k(f_1,\ldots,f_k)T_{d-k}(f_{k+1},\ldots, f_d),
\end{align}
where the map $T_\eta$ is defined by
\begin{equation*}
T_\eta(\psi_1,\ldots,\psi_\eta):=\int_{P_\sigma}\prod_{l=1}^\eta |\psi_l(t_l)|^{\frac{d+1}{d}}|\lambda_\g(t_l)|^{\frac{d+1}{2d}}\prod_{1\leq i<j\leq \eta}|t_j-t_i|^{-\frac1d}dt_1\ldots dt_\eta.
\end{equation*}
By multilinear interpolation, see the proof of Proposition 2.2 in \cite{Christ}, Christ shows that if $1\leq p<l$ and $p^{-1}+q^{-1}(l-1)/2=1$, then there exists a constant $C>0$ such that
\[\int_{\R^\eta}\prod_{i=1}^\eta \psi_i(t_i)\prod_{1\leq i<j\leq \eta}g_{ij}(t_i-t_j)dt_1\ldots dt_\eta\leq C \prod_{i=1}^\eta \|\psi_i\|_{L^p{(\R)}}\prod_{1\leq i<j\leq \eta}\|g_{ij}\|_{L^{q,\infty}(\R)}.\]
If we apply this result to $T_k$ with $q=d$ and $p=2d/(2d-k+1)$ we see that
\[|T_k(f_1,\ldots,f_k)|\leq C \prod_{j=1}^k\|(f_j\lambda_\g^{\frac{1}{2}})^{\frac{d+1}{d}}\|_{L^{\frac{2d}{2d-k+1}}(\R)}=C\prod_{j=1}^k\|f_j\lambda_\g^{\frac{1}{2}}\|_{L^{\frac{2(d+1)}{2d-k+1}}(\R)}^{\frac{d+1}{d}},\]
as the Vandermonde determinant is weakly integrable.
Since $k< d$, it follows that
\[\frac{2d+2}{2d-k+1}<\frac{2(d+1)}{d+1}= 2.\]
As $\text{supp } f_j\subset I_{n_j}$ and $|I_{n_j}|\leq 2^{n_j}$, it follows by Hölder's inequality that for each $j$,
\begin{align*}
\|f_j\lambda_\g^{\frac{1}{2}}\|_{L^{\frac{2(d+1)}{2d-k+1}}(\R)}^{\frac{2d+2}{2d-k+1}}=&\int_\R |f_j(t)\lambda_\g^{\frac{1}{2}}(t)|^{\frac{2d+2}{2d-k+1}}dt\\
\leq& |I_{n_j}|^{\frac{1}{r'}}\left\||f_j(t)\lambda_\g^{\frac{1}{2}}(t)|^{\frac{2d+2}{2d-k+1}}\right\|_{L^r(\R)}\\
=&2^{\frac{n_j(d-k)}{2d-k+1}}\|f_j\lambda_\g^{\frac{1}{2}}\|_{L^2(\R)}^{\frac{2d+2}{2d-k+1}},
\end{align*}
where
\[r=\frac{4d-2k+2}{2d+2}.\]
This is necessarily the same as
\[\|f_j\lambda_\g^{\frac{1}{2}}\|_{L^{\frac{2(d+1)}{2d-k+1}}(\R)}^{\frac{d+1}{d}}
\leq C2^{\frac{n_j(d-k)}{2d}}\|f_j\lambda_\g^{\frac12}\|_{L^2(\R)}^{\frac{d+1}{d}}
=C2^{\frac{ n_j}{d}\frac{d-k}{2}}\|f_j\|_{L^2(I;\lambda_\g dt)}^{\frac{d+1}{d}}.\]
Thus an upper bound on $T_k$ is given by
\begin{equation}\label{TkBound}|T_k(f_1,\ldots,f_k)|\leq C \prod_{j=1}^k2^{\frac{n_j}{d}\frac{d-k}{2}}\|f_j\|_{L^2(I;\lambda_\g dt)}^{\frac{d+1}{d}}.\end{equation}
Replacing $k\mapsto d-k$, a similar bound for $T_{d-k}$ is given by
\begin{equation}\label{TdkBound}|T_{d-k}(f_{k+1},\ldots,f_{d})|\leq C \prod_{j=k+1}^d2^{\frac{n_j}{d}\frac{k}{2}}\|f_j\|_{L^2(I;\lambda_\g dt)}^{\frac{d+1}{d}}.\end{equation}
Combining \eqref{FsigmaBound}, \eqref{TkBound}, \eqref{TdkBound}, and that $kd\geq 2$ results in
\begin{align*}
\|F_\sigma\|_{L^{\frac{d+1}{d}}(\R^d)}^{\frac{d+1}{d}}\leq&C 2^{\frac{1}{2d^2}\left((d^2-kd)\sum_{l=1}^kn_l+(kd-2kd)\sum_{l=k+1}^dn_l\right)}\prod_{j=1}^d\|f_j\|_{L^2(I;\lambda_\g dt)}^{\frac{d+1}{d}}\\
\leq &C 2^{\frac{1}{2d^2}(kd(d-k))(n_k-n_{k+1})}\prod_{j=1}^d\|f_j\|_{L^2(I;\lambda_\g dt)}^{\frac{d+1}{d}}.
\end{align*}
Here the fact that $n_l\leq n_{k}$ for $l\leq k$, and $n_l\geq n_{k+1}$ for $l\geq k+1$ was used. Applying that $n_{k+1}-n_k\geq (n_d-n_1)/d$ yields
\begin{equation}\label{FsigmaDiadicBound}\|F_\sigma\|_{L^{\frac{d+1}{d}}(\R^d)}\leq C 2^{-\frac{k(d-k)}{2d(d+1)}(n_{d}-n_{1})}\prod_{j=1}^d\|f_j\|_{L^2(I;\lambda_\g dt)}.\end{equation}
Combining \eqref{Hausdorff-Young}, \eqref{Radon-Nikodym-Derivative}, \eqref{FsigmaDiadicBound} and the triangle inequality we end up with
\begin{align*}
    \left\|\prod_{j=1}^d \mathcal{E}_\g f_{j}\right\|_{L^{d+1}(\R^d)}
    \leq \left\|\sum_{\sigma \in S_d}F_\sigma\right\|_{L^{\frac{d+1}{d}}(\R^d)}
    \leq C_2 2^{-K_{d,k}(n_{d}-n_{1})}\prod_{j=1}^d\|f_j\|_{L^2(I;\lambda_\g dt)},
\end{align*}
where the constant $K_{d,k}$ is given by
\[K_{d,k}=\frac{k(d-k)}{2d(d+1)}\geq \frac{d-1}{2d(d-1)}=\frac{1}{2d}>0,\]
as the quadratic form $kd-k^2$ obtains its minimum at the boundary, which is given by $k=1$, or $k=d-1$.

The proof is concluded by combining the case $n_d-n_1\leq d$ with the case $n_d-n_1>d$ by choosing
\[C=\max\{2^dC_1,C_2\},\quad 
K=\min\left\{1, K_{d,1},\ldots,K_{d,d-1}\right\}=\frac{1}{2d}.
\]
\end{proof}


\begin{proof}[Proof of Theorem \ref{MainRestrictionTheorem}]
Let 
\[
p'=\frac{d^2+d}{2}q,\quad D=\frac{d^2+d}{2}\in\mathbb{N}.
\]
Note that $p'=Dq$, and assume $p'\leq d^2+d$.
By the finite decomposition of Theorem \ref{MainDecompTheorem} and the triangle inequality, it is sufficient to consider one interval. The interval $I_j$ will remain fixed for the rest of the proof. By Proposition \ref{SqFuncEst}, followed by Minkowski's inequality, we see that
\begin{align*}
    \|\E_\g (\chi_{I_j}f)\|_{L^{p'}(\R^d)}^{p'}\leq& C\left\|\left(\sum_{n\in\mathbb{Z}}|\mathcal{E}_\g(\chi_{I_{j,n}}f)|^2\right)^\frac{1}{2}\right\|_{L^{p'}(\R^d)}^{p'}\\
    =&C\int_{\R^d}\left(\sum_{n\in\mathbb{Z}}|\mathcal{E}_\g(\chi_{I_{j,n}}f)(x)|^2\right)^\frac{p'}{2}dx\\
    =&C\int_{\R^d}\prod_{l=1}^D\left(\sum_{n_l\in\mathbb{Z}}|\mathcal{E}_\g(\chi_{I_{j,n_l}}f)(x)|^2\right)^\frac{p'}{2D}dx\\
    \leq& \int_{\R^d}\prod_{l=1}^D\sum_{n_l\in\mathbb{Z}}|\E_\g (\chi_{I_{j,n_l}}f)(x)|^{\frac{p'}{D}}dx\\
    =&D!\sum_{n_1\leq \ldots \leq n_D}\int_{\R^d}\prod_{l=1}^D|\E_\g (\chi_{I_{j,n_l}}f)(x)|^{\frac{p'}{D}}dx,
\end{align*}
and applying Lemma \ref{MultiLem} we get that
\begin{equation*}
    \|\E_\g (\chi_{I_j}f)\|_{L^{p'}(\R^d)}^{p'}\leq C_{d,N,p,q}\sum_{n_1\leq \ldots \leq n_D}2^{-K(n_D-n_1)}\prod_{l=1}^D\|\chi_{I_{j,n_{l}}}f\|_{L^{q'}(I;\lambda_\g dt)}^\frac{p'}{D}.
\end{equation*}
Define the set \[J_{j,n_1,n_D}=[a_j+2^{n_1-1},a_j+2^{n_D}]\cup [a_j-2^{n_D},a_j-2^{n_1-1}].\] Since $n_1\leq\ldots\leq n_D$, it follows that for any
\[
x\in I_{j,n_l}:=\{t\in I_j: 2^{n_l-1}<|t-a_j|\leq 2^{n_l}\},
\]
we must have $a_j+2^{{n_l-1}}<x\leq a_j+2^{n_l}$ or $a_j-2^{n_l}\leq x<a_j-2^{n_l-1}$. In particular, this implies that
$x\in J_{j,n_1,n_D}$.
Moreover, for fixed $n_1$ and $n_D$ there cannot exist more than $(n_D-n_1+1)^{D-2}$
different ways to assign the integers $n_2,\ldots n_{D-1}$ such that $n_1\leq n_2\leq\ldots \leq n_{D-1}\leq n_{D}$. 
As such, let $m=n_D-n_1+1$, so that
\begin{equation}\label{DobbelSumEst}
    \|\E_\g (\chi_{I_j}f)\|_{L^{p'}(\R^d)}^{p'}\leq C\sum_{m=1}^\infty\sum_{n\in\mathbb{Z}}2^{-Km}m^{D-2}\|\chi_{J_{j,n,n+m}}f\|_{L^{q'}(I;\lambda_\g dt)}^{p'}.
\end{equation}
Fix $m$ and assume $l\leq n$. Then by comparing the endpoints, we see that the two sets $J_{j,l,l+m}$ and $J_{j,n,n+m}$ have a non-empty intersection if and only if,
\begin{equation*}
    2^{n-1}\leq 2^{l+m},
\end{equation*}
which means that $n-l\leq m+1$. Thus, for each point $x\in I$, there are at most $m+1$ intervals $I_{j,n,n+m}$ which contain $x$, and so
\begin{equation*}
    \sum_{n\in\mathbb{Z}}|\chi_{J_{j,n,n+m}}f(x)|^{q'}\leq (m+1)|f(x)|^{q'}.
\end{equation*}
Using the trivial bound, one can estimate the sum over $n$ by,
\begin{align*}
    \sum_{n\in\mathbb{Z}}\|\chi_{J_{j,n,n+m}}f\|_{L^{q'}(I;\lambda_\g dt)}^{p'}
    \leq& \left(\sup_{n\in\mathbb{Z}}\|\chi_{J_{j,n,n+m}}f\|_{L^p(I;\lambda_\g dt)}\right)^{p'-q'}\sum_{n\in\mathbb{Z}}\|\chi_{J_{j,n,n+m}}f\|_{L^{q'}(I;\lambda_\g dt)}^{q'}\\
    \leq& \|f\|_{L^{q'}(I;\lambda_\g dt)}^{q'-p'}\sum_{n\in\mathbb{Z}}\|\chi_{J_{j,n,n+m}}f\|_{L^{q'}(I;\lambda_\g dt)}^{q'}\\
    =&\|f\|_{L^p(\lambda_\g)}^{q'-p'}\sum_{n\in\mathbb{Z}}\int_{I}|\chi_{J_{j,n,n+m}}f(t)|^{q'}\lambda_\g(t)dt\\
    \leq& (m+1)\|f\|^{p'}_{L^{q'}(I;\lambda_\g dt)}.
\end{align*}
Combined with \eqref{DobbelSumEst} this yields
\[
\|\E_\g (\chi_{I_j}f)\|_{L^{p'}(\R^d)}^{p'}\leq C\sum_{m=1}^\infty2^{-Km}(m+1)^{D-1}\|f\|_{L^{q'}(I;\lambda_\g dt)}^{p'}\leq C_{d,N,p,q}\|f\|_{L^{q'}(I;\lambda_\g dt)}^{p'},
\]
which completes the proof.
\end{proof}

\printbibliography
\end{document}